\DeclareFontFamily{U}  {MnSymbolF}{}
\DeclareSymbolFont{symbolsMN}{U}{MnSymbolF}{m}{n}
\DeclareFontShape{U}{MnSymbolF}{m}{n}{
    <-6>  MnSymbolF5
   <6-7>  MnSymbolF6
   <7-8>  MnSymbolF7
   <8-9>  MnSymbolF8
   <9-10> MnSymbolF9
  <10-12> MnSymbolF10
  <12->   MnSymbolF12}{}
\DeclareFontShape{U}{MnSymbolF}{b}{n}{
    <-6>  MnSymbolF-Bold5
   <6-7>  MnSymbolF-Bold6
   <7-8>  MnSymbolF-Bold7
   <8-9>  MnSymbolF-Bold8
   <9-10> MnSymbolF-Bold9
  <10-12> MnSymbolF-Bold10
  <12->   MnSymbolF-Bold12}{}
\DeclareMathSymbol{\tbigtimes}{\mathop}{symbolsMN}{2}
\newcommand*{\bigtimes}{%
  \DOTSB
  \tbigtimes
  \slimits@ 
}
\newtheorem{theorem}{Theorem}[section]
\newtheorem{lemma}[theorem]{Lemma}
\newtheorem{conjecture}[theorem]{Conjecture}
\theoremstyle{definition}
\theoremstyle{remark}
\numberwithin{equation}{section}
\def\bfa{{\mathbf a}}
\def\bfh{{\mathbf h}}
\def\bfn{{\mathbf n}}
\def\bft{{\mathbf t}}
\def\bfw{{\mathbf w}}
\def\bfx{{\mathbf x}}
\def\bfy{{\mathbf y}}
\def\bfz{{\mathbf z}}
\def\calI{{\mathcal I}}
 \def\Ktil{{\widetilde K}}
\def\Ktil{\widetilde K}
\def\dbN{{\mathbb N}}  
\def\dbR{{\mathbb R}}
\def\dbZ{{\mathbb Z}}
\def\grA{{\mathfrak A}}
\def\grB{{\mathfrak B}}
\def\grD{{\mathfrak D}}
\def\grf{{\mathfrak f}}\def\grF{{\mathfrak F}}
\def\grJ{{\mathfrak J}}
\def\grk{{\mathfrak k}} \def\grK{{\mathfrak K}}
\def\grm{{\mathfrak m}}\def\grM{{\mathfrak M}}
\def\grN{{\mathfrak N}}\def\grn{{\mathfrak n}}
\def\grS{{\mathfrak S}}\def\grP{{\mathfrak P}}
\def\grW{{\mathfrak W}}\def\grB{{\mathfrak B}}
\def\grK{{\mathfrak K}}\def\grp{{\mathfrak p}}
\def\grU{{\mathfrak U}}
\def\grV{{\mathfrak V}}
\def\grW{{\mathfrak W}}
\def\alp{{\alpha}} \def\bfalp{{\boldsymbol \alpha}} 
\def\bet{{\beta}}  \def\bfbet{{\boldsymbol \beta}}
\def\gam{{\gamma}} \def\Gam{{\Gamma}}
\def\bfgam{{\boldsymbol \gam}} 
\def\del{{\delta}} \def\Del{{\Delta}}
\def\tet{{\theta}} \def\bftet{{\boldsymbol \theta}} 
\def\Tet{{\Theta}}
\def\sig{{\sigma}}
\def\ome{{\omega}} \def\Ome{{\Omega}}
\def\eps{\varepsilon}
\def\le{\leqslant} \def\ge{\geqslant}
\def\d{{\,{\rm d}}}
\begin{document}
\title[Subconvexity in Vinogradov systems]{Subconvexity in inhomogeneous\\ Vinogradov 
systems}
\author[Trevor D. Wooley]{Trevor D. Wooley}
\address{Department of Mathematics, Purdue University, 150 N. University Street, West 
Lafayette, IN 47907-2067, USA}
\email{twooley@purdue.edu}
\subjclass[2010]{11D45, 11L15, 11P05}
\keywords{Subconvexity, Vinogradov's mean value theorem.}
\thanks{The author's work is supported by NSF grants DMS-2001549 and DMS-1854398.}
\date{}
\dedicatory{}
\begin{abstract}When $k$ and $s$ are natural numbers and $\bfh\in \dbZ^k$, denote by 
$J_{s,k}(X;\bfh)$ the number of integral solutions of the system
\[
\sum_{i=1}^s(x_i^j-y_i^j)=h_j\quad (1\le j\le k),
\]
with $1\le x_i,y_i\le X$. When $s<k(k+1)/2$ and $(h_1,\ldots ,h_{k-1})\ne {\mathbf 0}$, 
Brandes and Hughes have shown that $J_{s,k}(X;\bfh)=o(X^s)$. In this paper we improve 
on quantitative aspects of this result, and, subject to an extension of the main conjecture in 
Vinogradov's mean value theorem, we obtain an asymptotic formula for $J_{s,k}(X;\bfh)$ in 
the critical case $s=k(k+1)/2$. The latter requires minor arc estimates going beyond 
square-root cancellation.
\end{abstract}
\maketitle

\section{Introduction} In the analysis of Diophantine systems via the Hardy-Littlewood 
(circle) method, estimates are almost always limited by the convexity barrier, the most 
optimistic bound anticipated for error terms being given by the square-root of the number 
of choices for the variables available to the system. A recent exception to this rule involves 
inhomogeneous variants of Vinogradov's mean value theorem. When $k$ and $s$ are 
natural numbers and $\bfh\in \dbZ^k$, denote by $J_{s,k}(X;\bfh)$ the number of integral 
solutions of the system
\begin{equation}\label{1.1}
\sum_{i=1}^s(x_i^j-y_i^j)=h_j\quad (1\le j\le k),
\end{equation}
with $1\le \bfx,\bfy\le X$. Then Brandes and Hughes \cite[Theorem 1]{BH2021} 
have shown that $J_{s,k}(X;\bfh)=o(X^s)$ when $s<k(k+1)/2$ and $h_j\ne 0$ for 
some index $j$ with $j\le k-1$. We emphasise that a consideration of 
diagonal solutions reveals that $J_{s,k}(X;{\mathbf 0})\gg X^s$, so one must certainly 
have $\bfh\ne {\mathbf 0}$ in order to obtain a subconvex estimate for 
$J_{s,k}(X;\bfh)$. Such estimates will also be inaccessible when 
$s\ge \tfrac{1}{2}k(k+1)+1$, since an averaging argument then confirms that there are 
numerous $k$-tuples $\bfh$ for which 
$J_{s,k}(X;\bfh)\gg X^{2s-k(k+1)/2}\gg X^{s+1}$.\par

Our goal in this paper is to sharpen the results of Brandes and Hughes both quantitatively, 
and in the range of $s$ accessible to such conclusions. We seek also to establish an 
asymptotic formula for $J_{s,k}(X;\bfh)$ in the critical case $s=k(k+1)/2$, extending to 
exponents $k>3$ our recent work \cite{Woo2021c} relevant to the cubic case, on the 
assumption of an extended version of the main conjecture in Vinogradov's mean value 
theorem. Our conclusions vary in type according to the regime of interest. We begin with 
the estimates simplest to state.

\begin{theorem}\label{theorem1.1} Suppose that $k\ge 3$ and 
$\bfh\in \dbZ^k\setminus \{{\mathbf 0}\}$. Let $l$ be the smallest index with $h_l\ne 0$. 
Then, whenever $l<k$ and $s$ is an integer with
\begin{equation}\label{1.2}
1\le s\le \tfrac{1}{2}k(k+1)-\frac{k(k+1)-l(l+1)}{2(k-l)(k-l+1)},
\end{equation}
one has
\begin{equation}\label{1.3}
J_{s,k}(X;\bfh)\ll X^{s-1/2+\eps}.
\end{equation}
In particular, this estimate holds when $1\le l\le (k+1)/3$ and $s<k(k+1)/2$.
\end{theorem}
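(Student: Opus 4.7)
The plan is to apply the Hardy--Littlewood circle method to the Fourier representation
\[
J_{s,k}(X;\bfh)=\int_{[0,1)^k}|f_k(\bfalp;X)|^{2s}e(-\bfalp\cdot\bfh)\,d\bfalp,
\]
where $f_k(\bfalp;X)=\sum_{1\le x\le X}e(\alp_1x+\cdots+\alp_kx^k)$. The main conjecture in Vinogradov's mean value theorem provides the trivial estimate $|J_{s,k}(X;\bfh)|\le J_{s,k}(X;\mathbf{0})\ll X^{s+\eps}$ for $s\le\tfrac12 k(k+1)$, but this uses none of the oscillation in $e(-\bfalp\cdot\bfh)$; the task is to save the extra $X^{1/2}$ by exploiting $h_l\ne 0$.

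I would decompose $[0,1)^k=\grM\cup\grm$ into standard major and minor arcs. On $\grM$, write $\bfalp=\bfa/q+\bfbet$ and use the approximation $f_k(\bfalp;X)\approx q^{-1}S(q,\bfa)v_k(\bfbet;X)$ to express the major-arc contribution as a singular series multiplying an archimedean integral; standard analysis gives a main term of size $\ll X^{2s-k(k+1)/2+\eps}$, which is $\ll X^{s-1/2+\eps}$ throughout the integer subcritical range $s<\tfrac12 k(k+1)$ (since $2s-\tfrac12 k(k+1)\le s-1$ whenever $s\le\tfrac12 k(k+1)-1$). The substance of the proof is then the minor-arc estimate $\int_\grm|f_k(\bfalp;X)|^{2s}\,d\bfalp\ll X^{s-1/2+\eps}$. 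For this I would combine a pointwise Weyl-type bound $|f_k(\bfalp;X)|\ll X^{1-\sigma_{k,l}+\eps}$ on $\grm$, with $\sigma_{k,l}$ of order $((k-l)(k-l+1))^{-1}$ reflecting a reduction to an auxiliary Vinogradov system in the $k-l+1$ higher coefficients $\alp_l,\ldots,\alp_k$ (which are the ones ``seen'' by $\bfh$, since $h_1=\cdots=h_{l-1}=0$), together with H\"older's inequality against the subcritical moment bound $\int|f_k|^{2t}\,d\bfalp\ll X^{t+\eps}$ valid for $1\le t\le\tfrac12 k(k+1)$, choosing the H\"older exponent so that the combined bound equals $X^{s-1/2+\eps}$ precisely when $s$ satisfies (1.2).

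The main obstacle is deriving the Weyl-type exponent $\sigma_{k,l}$. A differencing argument tailored to eliminate the $l-1$ low-degree coefficients---decoupled from the inhomogeneity by the zero coordinates of $\bfh$, hence integrated out via orthogonality---must leave the top $k-l+1$ coefficients available to absorb the Weyl-type decay, producing exactly the strength $\sigma_{k,l}\asymp((k-l)(k-l+1))^{-1}$ needed for the H\"older step. Balancing against the Vinogradov moment then produces the quantity $\tfrac{k+l+1}{2(k-l+1)}$ appearing in (1.2), which is $\le 1$ exactly when $l\le(k+1)/3$: in this regime the entire subcritical range $s<\tfrac12 k(k+1)$ is accessible, while for larger $l$ the weaker Weyl-type bound turns (1.2) into a genuine restriction on~$s$.
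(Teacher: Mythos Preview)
Your approach has a fundamental gap in the minor arc step. You propose to bound
\[
\Bigl|\int_\grm |f_k(\bfalp;X)|^{2s}e(-\bfalp\cdot\bfh)\,d\bfalp\Bigr|\le \int_\grm |f_k(\bfalp;X)|^{2s}\,d\bfalp
\]
and then show the right-hand side is $\ll X^{s-1/2+\eps}$. But this inequality throws away the only place where $\bfh$ appears, and the resulting integral is \emph{not} small: for subcritical $s<\tfrac12 k(k+1)$ one has $J_{s,k}(X;\mathbf 0)\gg X^s$ from diagonal solutions, whilst the major-arc contribution is only of order $X^{2s-k(k+1)/2}$, so the minor-arc integral of $|f_k|^{2s}$ must itself be $\gg X^{s-\eps}$. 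Your desired bound $X^{s-1/2+\eps}$ is therefore impossible. Relatedly, the proposed Weyl exponent $\sigma_{k,l}$ cannot exist as described: the pointwise size of $f_k(\bfalp;X)$ on a set of minor arcs depends on $k$ and the arithmetic of $\bfalp$, not on $l$, which is purely a feature of the inhomogeneity $\bfh$ and has no way to enter a pointwise bound on $f_k$.

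The paper's argument is structurally different. Rather than a major/minor arc split, it introduces an auxiliary shift variable $y$ and an associated exponential sum $g_k(\bfalp,\Gamma;X)=\sum_{y\le X}e(y\Gamma+\sum_{j\ge 2}\nu_j(y;\bfh)\alp_j)$, where the polynomials $\nu_j(y;\bfh)$ encode $\bfh$ through the binomial identity. Averaging over $y$ yields (Lemma~2.1)
\[
J_{s,k}(X;\bfh)\ll X^{-1+\eps}\sup_{\Gamma}\int_{[0,1)^k}|f_k(\bfalp;2X)^{2s}g_k(\bfalp,\Gamma;X)|\,d\bfalp,
\]
so the inhomogeneity has been moved \emph{inside} the integrand, into $g_k$, rather than discarded. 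Since $h_1=\cdots=h_{l-1}=0$, the sum $g_k$ is effectively an exponential sum of degree $k-l$, and the paper bounds the mixed moment $\int|g_k|^{R}|f_k|^{2u}\,d\bfalp$ with $R=(k-l)(k-l+1)$ by combining Vinogradov in degrees $l$ and $k-l$ (Lemma~3.1). H\"older's inequality between this mixed moment and the pure moment $\int|f_k|^{2v}\,d\bfalp$ then produces exactly the range~(1.2). The index $l$ enters through the degree of $g_k$, not through any pointwise bound on $f_k$; this is the missing mechanism in your sketch.
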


We note that \cite[Corollary 2]{BH2021} obtains the estimate (\ref{1.3}) in the shorter 
range
\[
1\le l\le k-\tfrac{1}{2}(\sqrt{2k^2+2k+1}-1)\approx 0.29289k.
\]
The case $k=2$ is omitted from the statement of Theorem \ref{theorem1.1} because much 
stronger bounds are available in this case from the classical theory of quadratic polynomials.  
Thus, for example, the reader will have no difficulty in showing that when 
$\bfh\ne {\mathbf 0}$, one has $J_{s,2}(X;\bfh)\ll X^{s-1+\eps}$ $(s=1,2)$. Analogous 
estimates of similar strength to the latter may be obtained when $k\ge 3$ subject to 
suitable hypotheses concerning $s$ and the $k$-tuple $\bfh$.\par

\begin{theorem}\label{theorem1.2} Let $k$ be an integer with $k\ge 3$ and 
$\bfh\in \dbZ^k\setminus \{{\mathbf 0}\}$. Let $l$ be the smallest index having the 
property that $h_l\ne 0$. Then, whenever $l<k$ and $s$ is an integer with 
$1\le s\le l(l+1)/2$, one has $J_{s,k}(X;\bfh)\ll X^{s-1+\eps}$.
\end{theorem}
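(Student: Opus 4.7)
The plan is to exploit the translation invariance of the first $l$ equations of the system, coupled with the way the $(l+1)$-th equation is ``twisted'' under translation, to save a factor of $X$ over the trivial Vinogradov bound. Under the shift $(\bfx, \bfy) \mapsto (\bfx + t, \bfy + t)$, a binomial expansion yields
\[
\sum_{i=1}^s \bigl((x_i + t)^j - (y_i + t)^j\bigr) = \sum_{a=1}^j \binom{j}{a} t^{j-a} \sum_{i=1}^s (x_i^a - y_i^a).
\]
When $(\bfx, \bfy) \in [1, X]^{2s}$ solves the original system with right-hand side $\bfh$, the hypothesis $h_1 = \cdots = h_{l-1} = 0$ reduces the right-hand side above to $h_j$ for $j \le l$, so the first $l$ equations are translation-invariant. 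For $j = l+1$ it becomes $h_{l+1} + (l+1) t h_l$, a nontrivial linear function of $t$, since $h_l \ne 0$ and $k \ge l+1$ by hypothesis.

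Next, I would sum the shifted systems over $t \in \{0, 1, \ldots, T-1\}$ for a parameter $T$ to be chosen. Since translation by $t$ is a bijection between solutions $(\bfx, \bfy) \in [1, X]^{2s}$ of the system with right-hand side $\bfh$ and solutions $(\bfx + t, \bfy + t) \in [1+t, X+t]^{2s}$ of the shifted system with right-hand side $\bfh'(t)$ (whose first $l$ coordinates coincide with those of $\bfh$), I obtain
\[
T \cdot J_{s,k}(X; \bfh) = \sum_{t=0}^{T-1} \#\bigl\{(\bfu, \bfv) \in [1+t, X+t]^{2s} : \text{the shifted system holds}\bigr\}.
\]
Viewing the right-hand side as a count of triples $(\bfu, \bfv, t)$ with $(\bfu, \bfv) \in [1, X+T-1]^{2s}$, for each $(\bfu, \bfv)$ satisfying the first $l$ equations with $\bfh' := (h_1, \ldots, h_l)$, the $(l+1)$-th equation of the shifted system determines $t$ \emph{uniquely}, by the nonzero-slope observation above. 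Hence each such $(\bfu, \bfv)$ contributes at most one admissible $t$, and so
\[
T \cdot J_{s,k}(X; \bfh) \le J_{s,l}(X + T - 1; \bfh') \le J_{s,l}(X + T - 1),
\]
the last inequality coming from the triangle inequality inside the standard Fourier-integral representation of $J_{s,l}(\,\cdot\,;\bfh')$.

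Finally, I would invoke the sharp Vinogradov mean value theorem (Bourgain--Demeter--Guth, and the author's earlier work): for $s \le l(l+1)/2$ one has $J_{s,l}(Y) \ll Y^{s+\eps}$. Choosing $T = X$ then yields $J_{s,k}(X; \bfh) \ll X^{s - 1 + \eps}$, which is the conclusion of Theorem \ref{theorem1.2}. The main technical care goes into the bookkeeping of the translation bijection --- confirming that each $(\bfu, \bfv)$ contributes at most one valid $t$ (which is immediate from the linear relation in $t$), and that the reduction from the triple-count to the pair-count is tight up to the factor $T$. Once this setup is in place, the argument uses $s \le l(l+1)/2$ only through the Vinogradov bound, and $l < k$ only to ensure the presence of the $(l+1)$-th equation needed to extract $t$.
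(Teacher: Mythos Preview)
Your argument is correct, and it captures the same underlying idea as the paper's proof: introduce an auxiliary shift parameter, exploit that the first $l$ equations are translation-invariant while the $(l+1)$-th equation is linear in the shift, and then bound by $J_{s,l}$ via the main conjecture in Vinogradov's mean value theorem. The execution, however, is genuinely more elementary than the paper's. The paper first discards the equations of degree exceeding $l+1$ to reduce to $J_{s,l+1}(X;\bfh')$, then applies the Fourier-analytic shift machinery of Lemma~\ref{lemma2.1} (which introduces an extra generating function $g_{l+1}(\bfalp,\Gam;X)$ encoding the shift), and finally invokes Lemma~\ref{lemma3.3}, where the key observation is that $g_{l+1}$ collapses to a linear exponential sum, so that integrating over $\alp_{l+1}$ costs only a logarithm. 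Your direct counting argument bypasses both lemmata: summing over shifts $t$ and observing that the $(l+1)$-th equation pins down $t$ uniquely is precisely the Diophantine counterpart of the paper's Fourier manipulation. The paper itself acknowledges (see the opening of \S4) that for $\grB=[0,1)$ one can dispense with the elaborate proof of Lemma~\ref{lemma2.1} in favour of ``simple arguments counting solutions of Diophantine systems''; your proof is exactly such an argument. What the paper's route buys is flexibility---the Fourier version restricts to arbitrary measurable sets $\grB$, which is essential for the minor-arc analysis in \S5---but for Theorem~\ref{theorem1.2} alone your approach is cleaner and equally sharp.
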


The upper bound presented in this theorem saves a factor $X^{1-\eps}$ beyond 
square-root cancellation, improving on the factor $X^{1/2-\eps}$ visible in (\ref{1.3}). Such 
a conclusion lies beyond any anticipated by Brandes and Hughes (see the discussion 
concluding \cite{BH2021}). Moreover, as we show in Theorem \ref{theorem7.1}, there exist 
$k$-tuples $\bfh\ne {\mathbf 0}$ having the property that $J_{s,k}(X;\bfh)\gg X^{s-1}$, 
so the conclusion of Theorem \ref{theorem1.2} is in some respects best possible.\par

Our next theorem shows that $J_{s,k}(X;\bfh)=o\left(X^s\right)$ whenever 
$s<\tfrac{1}{2}k(k+1)$ and $(h_1,\ldots ,h_{k-1})\ne {\mathbf 0}$, improving on an 
earlier result of \cite{BH2021}.

\begin{theorem}\label{theorem1.3} Suppose that $k\ge 3$ and 
$\bfh\in \dbZ^k\setminus \{{\mathbf 0}\}$. Let $l$ be the smallest index having the 
property that $h_l\ne 0$. Then whenever $l<k$ and $s$ is a natural number satisfying 
$s<\tfrac{1}{2}k(k+1)$, one has
\[
J_{s,k}(X;\bfh)\ll X^\eps\left(X^{s-1/2}+X^{s-\del(s,k,l)}\right),
\]
where
\[
\del(s,k,l)=\tfrac{1}{2}(k-l)(k-l+1)\biggl( \frac{k(k+1)-2s}{k(k+1)-l(l+1)}\biggr) .
\]
\end{theorem}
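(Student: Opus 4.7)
The strategy would be to combine Theorem~\ref{theorem1.1} with the main conjecture in Vinogradov's mean value theorem through the convolution structure of $J_{s,k}(X;\cdot)$. A direct calculation shows that the condition $\del(s,k,l) \ge 1/2$ is equivalent to the range (\ref{1.2}) of Theorem~\ref{theorem1.1}, so for such $s$ the claim already reduces to Theorem~\ref{theorem1.1}, and I would focus on the complementary range $s > s_0$, where $s_0$ denotes the largest integer satisfying (\ref{1.2}). Setting $t = s - s_0 \ge 1$, the plan is to leverage Theorem~\ref{theorem1.1} at exponent $s_0$ as a pointwise bound on shifts, while handling the dependence on $t$ via the main conjecture as a global moment bound.

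Writing $f(\bfalp) = \sum_{1 \le x \le X} e(\alp_1 x + \cdots + \alp_k x^k)$, the representation $J_{s,k}(X;\bfh) = \int_{[0,1)^k}|f(\bfalp)|^{2s} e(-\bfh\cdot\bfalp)\,d\bfalp$ and the expansion of $|f(\bfalp)|^{2t}$ as a Fourier series with coefficients $J_{t,k}(X;\bfm)$ yield
\[
J_{s,k}(X;\bfh) = \sum_{\bfm \in \dbZ^k} J_{t,k}(X;\bfm)\,J_{s_0,k}(X;\bfh-\bfm).
\]
I would partition the sum over $\bfm$ according to the position of the first nonzero coordinate of $\bfh-\bfm$. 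On the ``generic'' set where this index is at most $l$, Theorem~\ref{theorem1.1} applies uniformly at exponent $s_0$, giving $J_{s_0,k}(X;\bfh-\bfm) \ll X^{s_0-1/2+\eps}$; an appropriate combination of Cauchy--Schwarz in $\bfm$ with the Vinogradov estimates $\sum_\bfm J_{t,k}(X;\bfm) = X^{2t}$ and $\sum_\bfm J_{t,k}(X;\bfm)^2 = J_{2t,k}(X)$ should then produce an exponent of $X$ that interpolates between $s_0 - \tfrac{1}{2}$ (at $t = 0$) and $s_1 := \tfrac{1}{2}k(k+1)$ (at $t = s_1 - s_0$), matching the identity $\del(s,k,l) = (s_1-s)/(2(s_1-s_0))$.

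The main obstacle will be the ``exceptional'' contribution, where $\bfm$ lies in the sublattice defined by $\bfm_j = h_j$ for $j \le l$ (equivalently, $\bfh-\bfm$ has its first nonzero coordinate at an index exceeding $l$, placing it outside the admissible range of Theorem~\ref{theorem1.1} at exponent $s_0$); this includes the diagonal term $\bfm = \bfh$. I would control it by an independent appeal to the Vinogradov main conjecture applied to the lower-degree projection, together with either an induction on $s$ or the trivial pointwise estimate $J_{s_0,k}(X;\bfh-\bfm) \le J_{s_0,k}(X) \ll X^{s_0+\eps}$ combined with a Vinogradov count of the exceptional set. The crucial technical step will be verifying that this exceptional contribution is dominated by the generic one, so that the interpolated exponent $\del(s,k,l)$ emerges as the true bound.
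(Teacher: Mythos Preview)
Your convolution identity is correct, but the proposed combination of Theorem~\ref{theorem1.1} with Cauchy--Schwarz does not yield the exponent $\del(s,k,l)$, even on the generic set. Take $k=3$, $l=2$, $s=5$: then $s_0=4$, $t=1$, and the target is $J_{5,3}(X;\bfh)\ll X^{5-1/3+\eps}$. For generic $\bfm$ you have $J_{4,3}(X;\bfh-\bfm)\ll X^{7/2+\eps}$, but since $\sum_\bfm J_{1,3}(X;\bfm)=X^2$, summing the pointwise bound gives only $X^{11/2+\eps}$; Cauchy--Schwarz gives $J_{2,3}(X)^{1/2}J_{8,3}(X)^{1/2}\ll X^{6+\eps}$, which is worse still. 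No H\"older exponent on the $\bfm$-sum recovers the missing factor of roughly $X^{5/6}$. The difficulty is structural: Theorem~\ref{theorem1.1} obtains its saving of $X^{1/2}$ from a single application of the shift device (Lemma~\ref{lemma2.1}) at level $s_0$, and once this is packaged as a black-box pointwise bound the saving cannot be amplified by convolving with $|f|^{2t}$.

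The paper instead applies Lemma~\ref{lemma2.1} directly at the target value $s$, which introduces an auxiliary sum $g_k(\bfalp,\Gam;X)$ of degree $k-l$ into the integrand. One then writes $|f|^{2s}|g|$ as $|g|^{1-Ra}\cdot (|g|^R|f|^{2u})^a\cdot (|f|^{2v})^{1-a}$ with $u=\tfrac{1}{2}l(l+1)$, $v=\tfrac{1}{2}k(k+1)$, $R=(k-l)(k-l+1)$, and $a=(k(k+1)-2s)/(k(k+1)-l(l+1))$; H\"older then feeds the trivial bound $|g|\le X$, the mixed moment of Lemma~\ref{lemma3.2}, and Vinogradov's mean value into the three factors, and the exponent $\del(s,k,l)=\tfrac{1}{2}Ra$ drops out. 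The point is that the interpolation happens at the level of the exponential-sum integral, with $g$ still present, not after the saving has been cashed out as a bound on Fourier coefficients.
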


A conclusion analogous to that of Theorem \ref{theorem1.3} is obtained in 
\cite[Theorem 1]{BH2021}, though with the weaker exponent
\[
\del(s,k,l)=\tfrac{1}{2}(k-l)(k-l+1)\biggl( \frac{k(k+1)-2s}{k(k+1)}\biggr) .
\]

\par The conclusions of Theorems \ref{theorem1.1}, \ref{theorem1.2} and 
\ref{theorem1.3} have nothing to say concerning $J_{s,k}(X;\bfh)$ at the critical exponent 
$s=\tfrac{1}{2}k(k+1)$. Readers less familiar with the nuances of Vinogradov's mean value 
theorem may care to note in this context that when $s>\tfrac{1}{2}k(k+1)$, then an 
application of the circle method delivers an asymptotic formula of the shape 
$J_{s,k}(X;\bfh)\sim C(\bfh)X^{2s-k(k+1)/2}$, where $C(\bfh)$ is positive provided that 
$\bfh$ satisfies appropriate local solubility conditions. Since the main term here is larger 
than the square-root of the number of available choices for the underlying variables, this 
situation with $s>\tfrac{1}{2}k(k+1)$ does not require subconvexity in its treatment. In 
contrast, when $s=\tfrac{1}{2}k(k+1)$, one requires subconvex minor arc estimates in 
order to show that the expected product of local densities delivers the anticipated 
asymptotic formula.\par

In recent work concerning the cubic case of the inhomogeneous Vinogradov system, the 
author applied the Hardy-Littlewood method to obtain an asymptotic formula for 
$J_{6,3}(X;\bfh)$ when $h_1\ne 0$ (see \cite[Theorem 1.1]{Woo2021c}). Moreover, when 
$h_1=0$ and $h_2\ne 0$, an asymptotic formula for $J_{6,3}(X;\bfh)$ is obtained in 
\cite[Theorem 1.2]{Woo2021c} provided that $X$ is sufficiently large in terms of $h_2$. 
Both conclusions depend on minor arc estimates with better than square-root cancellation. 
When the degree $k$ exceeds $3$, such conclusions are beyond the reach of current 
technology. Nonetheless, by application of conjectural mean value estimates potentially 
within reach of efficient congruencing and decoupling methods, some progress is possible.
\par

In order to describe the asymptotic formula associated with $J_{s,k}(X;\bfh)$ at the critical 
point $s=k(k+1)/2$, we introduce some notation. We write $B_k(X;\bfh)$ for 
$J_{k(k+1)/2,k}(X;\bfh)$. Next, we introduce the generating functions
\begin{equation}\label{1.4}
I(\bfbet)=\int_0^1e(\bet_1\gam+\ldots +\bet_k\gam^k)\d\gam
\end{equation}
and
\begin{equation}\label{1.5}
S(q,\bfa)=\sum_{r=1}^q e_q(a_1r+\ldots +a_kr^k),
\end{equation}
in which we write $e(z)$ for $e^{2\pi iz}$ and use $e_q(u)$ as shorthand for 
$e^{2\pi i u/q}$. Putting $n_j=h_jX^{-j}$ $(1\le j\le k)$, we define the {\it singular 
integral}
\begin{equation}\label{1.6}
\grJ_k(\bfh)=\int_{\dbR^k}|I(\bfbet )|^{k(k+1)}e(-\bfbet \cdot \bfn)\d\bfbet ,
\end{equation}
in which $\bfbet \cdot \bfn$ denotes $\bet_1n_1+\ldots +\bet_kn_k$. Finally, we define 
the {\it singular series}
\begin{equation}\label{1.7}
\grS_k(\bfh)=\sum_{q=1}^\infty 
\sum_{\substack{1\le \bfa\le q\\ (q,a_1,\ldots a_k)=1}}
\left|q^{-1}S(q,\bfa)\right|^{k(k+1)}e_q(-\bfa \cdot \bfh).
\end{equation}
We note that both the singular integral $\grJ_k(\bfh)$ and the singular series 
$\grS_k(\bfh)$ are known to converge absolutely (see \cite[Theorem 1]{Ark1984} or 
\cite[Theorem 3.7]{AKC2004}).\par

Our progress is conditional on the extended main conjecture in Vinogradov's mean value 
theorem (Conjecture \ref{conjecture8.1}). Once again, the conclusion of the next theorem 
implicitly encodes a minor arc estimate beyond the convexity barrier.

\begin{theorem}\label{theorem1.4} Assume the extended main conjecture in Vinogradov's 
mean value theorem. Suppose that $\bfh\in \dbZ^k$ and $h_l\ne 0$ for some index $l$ 
with $1\le l<k$. Then provided that $X$ is sufficiently large in terms of $\bfh$, one has
\[
B_k(X;\bfh)=\grJ_k(\bfh)\grS_k(\bfh)X^{k(k+1)/2}+o(X^{k(k+1)/2}),
\]
in which $0\le \grJ_k(\bfh)\ll 1$ and $0\le \grS_k(\bfh)\ll 1$.
\end{theorem}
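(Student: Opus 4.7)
The plan is to apply the Hardy-Littlewood circle method. Writing
\[
f_k(\bfalp;X)=\sum_{1\le x\le X}e(\alp_1x+\alp_2x^2+\ldots +\alp_kx^k),
\]
orthogonality yields
\[
B_k(X;\bfh)=\int_{[0,1)^k}|f_k(\bfalp;X)|^{k(k+1)}e(-\bfalp\cdot\bfh)\d\bfalp.
\]
Fix a small positive parameter $\eta$, set $Q=X^\eta$, and dissect $[0,1)^k$ into major arcs $\grM$, consisting of those $\bfalp$ for which there exist $q\le Q$ and $a_1,\ldots ,a_k$ with $(q,a_1,\ldots ,a_k)=1$ and $|q\alp_j-a_j|\le QX^{-j}$ $(1\le j\le k)$, together with the complementary minor arcs $\grm=[0,1)^k\setminus \grM$.

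On $\grM$ I would apply the standard approximation $f_k(\bfalp;X)=q^{-1}S(q,\bfa)v(\bfbet)+O(Q^{1+\eps})$, in which $\bfbet=\bfalp-\bfa/q$ and $v(\bfbet)=\int_0^Xe(\bet_1\gam+\ldots +\bet_k\gam^k)\d\gam$. Raising to the $k(k+1)$-th power and integrating, then extending the resulting truncated arithmetic sum and Euclidean integral to the full domain using their absolute convergence \cite{Ark1984,AKC2004}, and rescaling $\gam\mapsto X\gam$ to convert $v$ into $XI$, one arrives at
\[
\int_{\grM}|f_k(\bfalp;X)|^{k(k+1)}e(-\bfalp\cdot\bfh)\d\bfalp=\grJ_k(\bfh)\grS_k(\bfh)X^{k(k+1)/2}+o(X^{k(k+1)/2})
\]
for $\eta$ small enough. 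The bounds $0\le \grJ_k(\bfh),\grS_k(\bfh)\ll 1$ are then immediate from the cited convergence together with the non-negativity of the underlying summands and integrands.

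The real content lies in the minor arc estimate
\[
\int_{\grm}|f_k(\bfalp;X)|^{k(k+1)}e(-\bfalp\cdot\bfh)\d\bfalp=o(X^{k(k+1)/2}).
\]
Since Vinogradov's mean value theorem (now established) delivers $\int_{[0,1)^k}|f_k(\bfalp;X)|^{k(k+1)}\d\bfalp\ll X^{k(k+1)/2+\eps}$, the crude pointwise bound $|e(-\bfalp\cdot\bfh)|\le 1$ is useless, and one must extract genuine cancellation from the oscillating factor. Exploiting $h_l\ne 0$ with $l<k$, I would couple each $\bfalp\in\grm$ with a translated companion $\bfalp+\bftet$, where $\bftet$ is supported essentially in the $l$-th coordinate and chosen so that $e(-\bftet\cdot\bfh)$ is close to $-1$. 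Averaging over admissible shifts and applying H\"older's inequality reduces the problem to bounding twisted mean values $\int_{\grm}|f_k(\bfalp;X)|^a|f_k(\bfalp+\bftet;X)|^b\d\bfalp$ with $a+b=k(k+1)$, and Conjecture \ref{conjecture8.1} is invoked precisely to supply the power saving beyond the square-root barrier.

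The main obstacle is this final reduction: one must recast the oscillating minor arc integral as a shifted mean value of a shape to which Conjecture \ref{conjecture8.1} applies, and calibrate the shift $\bftet$ so that its admissible range is large enough to yield the destructive interference in $e(-\bfalp\cdot\bfh)$ yet small enough to keep both $\bfalp$ and $\bfalp+\bftet$ in $\grm$, with any boundary exchange with $\grM$ absorbed into a lower-order error. The dependence on $\bfh$ underlying the hypothesis that $X$ be sufficiently large in terms of $\bfh$ enters through the natural scale $|h_l|^{-1}$ of these shifts. The construction is the higher-degree analogue of the cubic treatment in \cite{Woo2021c}, with Conjecture \ref{conjecture8.1} taking the place of the unconditional Weyl-type bounds available when $k=3$.
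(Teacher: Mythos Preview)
Your major-arc treatment and the overall circle-method framing are fine, but the minor-arc mechanism you sketch diverges from the paper's and, as stated, does not close. You propose to shift in the Fourier variable, pairing $\bfalp$ with $\bfalp+\bftet$ where $\bftet$ lives in the $l$-th coordinate and $e(-\bftet\cdot\bfh)\approx -1$. That forces $|\tet_l|\asymp |h_l|^{-1}$, a scale independent of $X$; at such a macroscopic shift $|f_k(\bfalp;X)|$ and $|f_k(\bfalp+\bftet;X)|$ are essentially uncorrelated, and the mixed integrals $\int_\grm |f_k(\bfalp)|^a|f_k(\bfalp+\bftet)|^b\d\bfalp$ with $a+b=k(k+1)$ are, by H\"older, bounded only by $J_{k(k+1)/2,k}(X)$ itself. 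Conjecture~\ref{conjecture8.1} gives nothing here either: at the critical moment $2s=k(k+1)$ both terms on its right-hand side collapse to $X^{k(k+1)/2}$ regardless of $\grB$, so it cannot supply a power saving when invoked on $\grm$ at this exponent.

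The paper instead shifts in the \emph{physical} variable. Lemma~\ref{lemma2.1} replaces $x$ by $x-y$ and sums over $1\le y\le X$, converting the oscillating weight $e(-\bfalp\cdot\bfh)$ into an extra exponential sum $g_k(\bfalp,\Gam;X)$ built from the polynomials $\nu_j(y;\bfh)$, at the cost of a factor $X^{-1}$. After H\"older, one faces (i) a pointwise Weyl bound $\sup_{\grm}|f_k|\ll X^{1+\eps}Q^{-1/(k(k-1))}$, which is where the genuine minor-arc saving comes from, (ii) the standard Vinogradov mean $J_{k(k+1)/2,k}(X)$, and (iii) a mixed mean $\int |f_k|^{k(k+1)-2}|g_k|^{2v}\d\bfalp$ with $v=\tfrac{1}{2}(k-l)(k-l+1)$. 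It is only in (iii) that Conjecture~\ref{conjecture8.1} enters (Lemma~\ref{lemma5.2}): the system underlying this mean forces $\bfx,\bfy$ into a box $\grV_{k-l}(A)$ of measure $\asymp X^{-v}$, and because $2v$ copies of $f_k$ have been peeled off, the remaining exponent $k(k+1)-2v$ lies above $\tfrac{1}{2}k(k+1)+2$, so the conjecture applies and yields $V_2\ll X^{k(k+1)/2+2v-2+\eps}$. Finally, the paper does not use your two-set dissection: Lemma~\ref{lemma2.1} restricts $\grB$ to the single coordinate $\alp_k$, so one must work with $\grW_1=[0,1)^{k-1}\times\grm(Q)$ and then prune the intermediate regions $\grW_2,\grW_3$ separately (by the arguments of \cite{Woo2021b}) before reaching the narrow arcs $\grP$ that deliver the main term. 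Your proposal omits both the physical-space shift and this layered dissection, and misidentifies the role of the extended conjecture; these are the missing ingredients.
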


We turn now to the topic of paucity and its relation to inhomogeneous Vinogradov systems. 
When the number of variables in the Vinogradov system (\ref{1.1}) is small, one may 
obtain estimates for $J_{s,k}(X;\bfh)$ far below the convexity barrier. That such should be 
possible is apparent from recent work of the author \cite{Woo2021a} concerning paucity in 
relatives of Vinogradov's mean value theorem. Consider, by way of an illustrative example, 
the system of equations
\begin{equation}\label{1.8}
x_1^j+\ldots +x_k^j=y_1^j+\ldots +y_k^j\quad (\text{$1\le j\le k$, $j\ne k-d$}),
\end{equation}
in which $k\ge 4$ and the non-negative integer $d$ is fixed. Let $I^*_{k,d}(X)$ denote the 
number of integral solutions of (\ref{1.8}) with $1\le \bfx,\bfy\le X$ in which 
$(x_1,\ldots ,x_k)$ is not a permutation of $(y_1,\ldots ,y_k)$. Then 
\cite[Corollary 1.2]{Woo2021a} shows that when $d=o(k^{1/4})$, one has 
$I^*_{k,d}(X)\ll X^{(2+o(1))\sqrt{k}}$. However, should one have
\[
\sum_{i=1}^k(x_i^{k-d}-y_i^{k-d})=h_{k-d}\ne 0,
\]
then $(x_1,\ldots ,x_k)$ cannot be a permutation of $(y_1,\ldots ,y_k)$. Thus we conclude 
that when $\bfh=(0,\ldots ,0,h_{k-d},0,\ldots ,0)$, with $h_{k-d}\ne 0$, then
\[
J_{k,k}(X;\bfh)\ll X^{(2+o(1))\sqrt{k}}.
\]
By elaborating on these ideas, non-trivial estimates may be obtained without restriction on 
$d$.

\begin{theorem}\label{theorem1.5} Suppose that $k\ge 3$ and 
$\bfh\in \dbZ^k\setminus \{{\mathbf 0}\}$. Suppose further that for some index $l$ with 
$2\le l\le k$ one has $h_l\ne 0$, but that $h_j=0$ when $j\ne l$ and $1\le j\le k$. Then one 
has $J_{k,k}(X;\bfh)\ll X^{k-l+1+\eps}$.
\end{theorem}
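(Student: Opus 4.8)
The plan is to pass to a polynomial formulation and combine a size estimate for a difference polynomial with an elementary divisor argument. First I would observe that any solution counted by $J_{k,k}(X;\bfh)$ satisfies $\sum_i(x_i^j-y_i^j)=0$ for every $j\in\{1,\dots,k\}\setminus\{l\}$ while $\sum_i(x_i^l-y_i^l)=h_l\ne 0$, so $(x_1,\dots,x_k)$ is not a rearrangement of $(y_1,\dots,y_k)$: this is exactly the regime of non-diagonal solutions of an incomplete Vinogradov system flagged in the discussion above. Write $P(t)=\prod_i(t-x_i)$, $R(t)=\prod_i(t-y_i)$, $D=P-R$, and $\Delta_n=e_n(\bfx)-e_n(\bfy)$, so that the coefficient of $t^{k-n}$ in $D$ equals $(-1)^n\Delta_n$.

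The crux will be the estimate $|D(t)|\ll X^{k-l}$ for all $t\in[1,X]$. Set $d=k-l$. Feeding the hypotheses $p_j(\bfx)=p_j(\bfy)$ $(j\ne l)$ and $p_l(\bfx)-p_l(\bfy)=h_l$ into Newton's identities, an induction on $n$ gives $\Delta_n=0$ for $n<l$, then $\Delta_l=(-1)^{l-1}h_l/l$, and then $|\Delta_{l+b}|\ll X^{b}$ for $0\le b\le d$, with implied constant depending only on $k$ and $|h_l|$. (Equivalently, the reciprocal polynomials $E,F$ of $P,R$ satisfy $E(t)\equiv F(t)\exp(-\tfrac{h_l}{l}t^l)\pmod{t^{k+1}}$, which packages all of this at once.) Hence $D$ has degree at most $d$, its coefficient of $t^{d-b}$ is $\ll X^{b}$, its leading coefficient $-h_l/l$ is nonzero, and summing the coefficient bounds yields $|D(t)|\ll X^{d}$ on $[1,X]$.

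Now for the count. Since $D\not\equiv 0$ has degree at most $d$, it has at most $d$ roots. If every $y_j$ is a root of $D$, then the $y_j$ take at most $d$ distinct values; there are $\ll X^{d}$ such multisets $\bfy$, and each determines $\bfx$ up to $O(1)$ (the power sums $p_j(\bfx)=p_j(\bfy)+h_j$ for $j\le k$ determine $\bfx$ as a multiset), so this degenerate case contributes $\ll X^{d}$. Otherwise I would relabel so that $y_1$ is not a root of $D$; then $1\le|D(y_1)|\ll X^{d}$, and evaluating $D=P-R$ at $y_1$ gives $\prod_{m=1}^k(y_1-x_m)=D(y_1)$, so each $y_1-x_m$ divides the fixed integer $D(y_1)$. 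I would then enumerate: the value $D(y_1)$ ($\ll X^{d}$ choices); the integer $y_1$ ($\ll X$ choices); the $k$-tuple of divisors $(y_1-x_1,\dots,y_1-x_k)$ of $D(y_1)$ ($\ll X^{\eps}$ choices, which pins down $\bfx$); and finally $\bfy$, determined by $\bfx$ up to $O(1)$ as above. Multiplying these, $J_{k,k}(X;\bfh)\ll X^{d+1+\eps}=X^{k-l+1+\eps}$.

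The hard part will be the size estimate $|D(t)|\ll X^{k-l}$ on $[1,X]$: a priori $P$ and $R$ are separately of size $X^{k}$ there, and it is exactly the combined force of all $k$ defining equations, unwound through Newton's identities, that forces their difference down to $X^{k-l}$. Once that is secured, the divisor enumeration and the recovery of $\bfy$ from $\bfx$ are routine, and the hypothesis $l\ge 2$ enters only in guaranteeing that the cheap terminal parameter count ($X^{d}\cdot X\cdot X^{\eps}$) genuinely beats the trivial bound $X^{k}$.
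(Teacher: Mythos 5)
Your argument is correct, and its core is the same as the paper's: use Newton's identities to show that the elementary symmetric function differences vanish below index $l$ and satisfy $|\sig_n(\bfx)-\sig_n(\bfy)|\ll X^{n-l}$ thereafter, so that $D(t)=\prod_i(t-x_i)-\prod_i(t-y_i)$ has degree $k-l$, nonzero leading coefficient $-h_l/l$, and size $O(X^{k-l})$ on $[1,X]$; then evaluate at a $y_j$ and run a divisor argument. You streamline two steps, both legitimately. First, the paper disposes of solutions with some $x_i=y_m$ by invoking its Theorem \ref{theorem7.2} (which rests on Steinig's paucity result) to get $J_{k,k}=J^*_{k,k}$, whereas you instead split on whether every $y_j$ is a root of $D$ and count the degenerate case directly ($\ll X^{k-l}$ tuples, since $D\not\equiv 0$ has at most $k-l$ roots); this makes the proof self-contained and avoids the external input. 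Second, after fixing $\bfx$ and $y_1$, the paper runs a second divisor argument with the roles of $\bfx$ and $\bfy$ interchanged to pin down $\bfy$, whereas you simply note that the power sums of $\bfy$ are determined by those of $\bfx$ and by $\bfh$, hence the multiset $\{y_1,\ldots,y_k\}$ is determined exactly; this is shorter and equally valid. The enumeration (value of $D(y_1)$: $\ll X^{k-l}$ choices; $y_1$: $\ll X$; ordered factorizations of $D(y_1)$: $\ll X^{\eps}$) matches the paper's and yields the stated bound, including the boundary case $l=k$ where $D$ is a nonzero constant and the degenerate case is empty.
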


The conclusion of this theorem yields stronger bounds than any supplied by Theorems 
\ref{theorem1.1} and \ref{theorem1.2} when $l\ge 3$.\par

This paper is organised as follows. In \S2 we adapt the author's work on the asymptotic 
formula in Waring's problem \cite{Woo2012b} to bound Fourier coefficients associated with 
the inhomogeneous Vinogradov system (\ref{1.1}). This approach has a significant 
advantage over the corresponding analysis of \cite{BH2021}, which is that the mean values 
of interest may be restricted to subsets of $[0,1)^k$, such as sets of minor arcs of use in 
applications of the Hardy-Littlewood method. We apply this method in combination with 
H\"older's inequality, relating $J_{s,k}(X;\bfh)$ to mixed mean value estimates more 
efficient than the simple ones considered in \cite{BH2021}. These mixed mean values are 
examined in \S3, preparing the ground in \S4 for the proof of our simplest subconvex 
bounds described in Theorems \ref{theorem1.1}, \ref{theorem1.2} and \ref{theorem1.3}. 
Preparations for the proof of Theorem \ref{theorem1.4} are presented in \S5, where we 
apply the extended main conjecture in Vinogradov's mean value theorem as the key input to 
provide subconvex minor arc estimates. The application of the Hardy-Littlewood method 
itself is described in \S6, where the proof of Theorem \ref{theorem1.4} is completed. In \S7 
we explore the application of ideas from the theory of paucity to bounds for 
$J_{s,k}(X;\bfh)$, and in particular we prove Theorem \ref{theorem1.5}. Finally, in the 
appendix attached as \S8, we discuss the extended main conjecture in Vinogradov's mean 
value theorem and its immediate applications to generalisations of small cap estimates.\par

Our basic parameter is $X$, a sufficiently large positive number. Whenever $\eps$ appears 
in a statement, either implicitly or explicitly, we assert that the statement holds for each 
$\eps>0$. In this paper, implicit constants in Vinogradov's notation $\ll$ and $\gg$ may 
depend on $\eps$, $k$ and $s$. We make use of vector notation in the form 
$\bfx=(x_1,\ldots,x_r)$, the dimension $r$ depending on the course of the argument. We 
also write $(a_1,\ldots ,a_s)$ for the greatest common divisor of the integers 
$a_1,\ldots ,a_s$, any ambiguity between ordered $s$-tuples and corresponding greatest 
common divisors being easily resolved by context. Finally, we write $\|\tet\|$ for 
$\min\{|\tet-m|:m\in \dbZ\}$.

\section{Auxiliary mean values utilising shifts} We first establish a reasonably flexible mean 
value estimate by applying ideas underlying our recent work on the Hilbert-Kamke problem, 
as modified to handle the cubic case of the inhomogeneous Vinogradov system (see 
\cite[Theorem 2.1]{Woo2021b} and \cite[Lemma 2.1]{Woo2021c}). This argument has its 
genesis in earlier work of the author concerning the asymptotic formula in Waring's problem 
(see \cite[Lemma 10.1]{Woo2012a} and \cite[Theorem 2.1]{Woo2012b}). Define 
$f(\bfalp;X)=f_k(\bfalp;X)$ by
\begin{equation}\label{2.1}
f_k(\bfalp;X)=\sum_{1\le x\le X}e(\alp_1x+\ldots +\alp_kx^k).
\end{equation}
Then, when $\bfh\in \dbZ^k$ and $\grB\subseteq \dbR$ is measurable, we put
\begin{equation}\label{2.2}
I_s(\grB;X;\bfh)=\int_\grB\int_{[0,1)^{k-1}}|f_k(\bfalp;X)|^{2s}e(-\bfalp\cdot \bfh)
\d\bfalp ,
\end{equation}
in which $\bfalp\cdot \bfh=\alp_1h_1+\ldots +\alp_kh_k$ and $\d\bfalp$ denotes 
$\d\alp_1\cdots \d\alp_k$. Note that by orthogonality, one then has
\begin{equation}\label{2.3}
J_{s,k}(X;\bfh)=I_s([0,1);X;\bfh).
\end{equation}
We also make use of the generating function $g(\bfalp,\tet;X)=g_k(\bfalp,\tet;X)$ defined 
by putting
\begin{equation}\label{2.4}
g_k(\bfalp, \tet;X)=\sum_{1\le y\le X}e\left( y\tet+\nu_2(y;\bfh)\alp_2+\ldots 
+\nu_k(y;\bfh)\alp_k\right) ,
\end{equation}
in which
\begin{equation}\label{2.5}
\nu_j(y;\bfh)=\sum_{i=0}^{j-1}\binom{j}{i}h_{j-i}y^i\quad (1\le j\le k).
\end{equation}

\begin{lemma}\label{lemma2.1} Suppose that $s\in \dbN$, $\bfh\in \dbZ^k$ and 
$\grB\subseteq \dbR$ is measurable. Then
\[
I_s(\grB;X;\bfh)\ll X^{-1}(\log X)^{2s}\sup_{\Gam\in [0,1)}\int_\grB \int_{[0,1)^{k-1}}
|f_k(\bfalp;2X)^{2s}g_k(\bfalp,\Gam;X)|\d\bfalp .
\]
\end{lemma}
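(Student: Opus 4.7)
The plan is to execute a translation-invariance shift argument, in the spirit of \cite[Lemma 2.1]{Woo2021c} and \cite[Theorem 2.1]{Woo2012b}. The starting observation is that if $(\bfx,\bfy)\in[1,X]^{2s}$ satisfies $\sum_i(x_i^j-y_i^j)=h_j$ for $2\le j\le k$, then for any integer $y\in[1,X]$ the common shift $(\bfx,\bfy)\mapsto(\bfx+y\mathbf{1},\bfy+y\mathbf{1})$ produces a tuple in $[1,2X]^{2s}$ satisfying
\[
\sum_i\bigl((x_i+y)^j-(y_i+y)^j\bigr)=\nu_j(y;\bfh)+jy^{j-1}(S_1-h_1)\qquad (2\le j\le k),
\]
where $S_1=\sum_i(x_i-y_i)$. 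This follows from the binomial theorem combined with the definition (\ref{2.5}) of $\nu_j$; the correction term $jy^{j-1}(S_1-h_1)$ encodes the discrepancy from the un-imposed first equation (which is precisely the datum being averaged through the $\alp_1$-integration over $\grB$).

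Applying orthogonality to $I_s(\grB;X;\bfh)$ expresses it as a sum over $(\bfx,\bfy)$ satisfying the $j\ge 2$ equations, weighted by the Fourier factor $\int_\grB e(\alp_1(S_1-h_1))\d\alp_1$. Performing the above shift for each $y\in\{1,\ldots,X\}$ and relaxing the range of the shifted variables to $[1,2X]^{2s}$ multiplies the left-hand side by $X$, giving $XI_s(\grB;X;\bfh)\le\sum_{y=1}^X T(y)$ for suitable shifted counts $T(y)$. Re-expressing $T(y)$ as an integral, the corrected equations suggest the lower-triangular substitution $\alp_1\mapsto\Gam=\alp_1-\sum_{j\ge 2}jy^{j-1}\alp_j$ (Jacobian one), after which the integrand factorizes as $|f_k(\bfalp;2X)|^{2s}$ times an explicit phase whose $y$-dependence is precisely the summand of $g_k(\bfalp,0;X)$, so that summation over $y$ produces the generating function $g_k$.

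The hard part will be the $y$-dependence this substitution introduces into the $\alp_1$-integration domain $\grB$, which is translated by the $\bfalp$-dependent amount $\sum_{j\ge 2}jy^{j-1}\alp_j$; this prevents the $y$-sum from being passed freely through the $\Gam$-integral. To overcome this I would expand $|f_k(\bfalp;2X)|^{2s}$ in its Fourier series in $\alp_1$ (a trigonometric polynomial of degree at most $4sX$), so that the substitution produces an extra exponential factor $e(-m\sum_{j\ge 2}jy^{j-1}\alp_j)$ at each Fourier mode $m$. Combined with the existing $\nu_j$-phases, this yields sums of the form $\sum_{y}e(y\Gam^*+\sum_{j\ge 2}\alp_j\nu_j(y;\bfh))$, which are values of $g_k(\bfalp,\Gam^*;X)$ at an auxiliary parameter $\Gam^*$ depending on $m$ and $\bfalp$; bounding pointwise by $\sup_{\Gam\in[0,1)}|g_k(\bfalp,\Gam;X)|$ and passing the supremum outside yields the prescribed form. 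The factor $(\log X)^{2s}$ arises from the standard logarithmic bound on partial sums of the Fourier expansion, iterated to control the $2s$ constituent factors of $|f_k(\bfalp;2X)|^{2s}$. Taking absolute values at the end delivers the stated inequality.
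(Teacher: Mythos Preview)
Your proposal rests on a misreading of the notation: in the definition (\ref{2.2}) the set $\grB$ governs $\alp_k$, not $\alp_1$. The inner integral $\int_{[0,1)^{k-1}}$ is over $\alp_1,\ldots,\alp_{k-1}$, as the shorthand $\d\bfalp_{k-1}=\d\alp_1\cdots\d\alp_{k-1}$ in the paper's proof makes explicit, and as the application in \S5 to $\grm(Q)$ (a set of minor arcs defined by $|q\alp-a|\le QX^{-k}$) requires. Under the correct reading, orthogonality in $\alp_1,\ldots,\alp_{k-1}$ imposes \emph{all} of the equations of degree $1$ through $k-1$; after the shift $\bfx\mapsto\bfx+y\mathbf{1}$ every lower power sum appearing in the binomial expansion is already fixed to the corresponding $h_l$, so there is no correction term $jy^{j-1}(S_1-h_1)$, no substitution in $\alp_1$ is needed, and the second half of your plan dissolves.

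What remains is the genuine obstruction: the shifted variables lie in $[y+1,y+X]$, and this $y$-dependence of the summation range must be removed before the $y$-sum can be executed to produce $g_k$. Your ``relaxation to $[1,2X]^{2s}$'' does not accomplish this: $I_s(\grB;X;\bfh)$ is in general complex-valued, and even when real the expansion of $|f_k^{(y)}|^{2s}$ is a sum of unimodular (not nonnegative) terms, so enlarging the range is not an inequality in either direction. The paper instead writes, for each of the $2s$ factors, the exact identity
\[
f_k(\bfalp;X)=\int_0^1\grf_y(\bfalp;\gam)K(\gam)\d\gam,\qquad K(\gam)=\sum_{1\le z\le X}e(-\gam z),
\]
so that the Dirichlet-type kernel $K$ detects $x-y\in[1,X]$ and the $x$-sum may run over $[1,2X]$ with no error. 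The $y$-sum is then performed \emph{exactly} inside the $\bfgam$-integral and before any absolute values: the phase $e(-y\Gam(\bfgam))$ with $\Gam(\bfgam)=\sum_i(\gam_i-\gam_{s+i})$ combines with the $\nu_j$-phases to yield $g_k(\bfalp,\Gam(\bfgam);X)$, and only afterwards does one take moduli and bound by the supremum over $\Gam$. The factor $(\log X)^{2s}$ is precisely $\bigl(\int_0^1|K(\gam)|\d\gam\bigr)^{2s}$, not a partial-Fourier-sum estimate. Your sketch would need to supply an equally exact mechanism for the range; as written it does not.
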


\begin{proof} The argument we present here is very similar to that underlying the proof of 
\cite[Lemma 2.1]{Woo2021c}, though there are sufficiently many differences that a full 
account seems warranted. We first reformulate the mean value $I_s(\grB;X;\bfh)$ defined 
in (\ref{2.2}) in preparation for the exploitation of a shift in the underlying variables. Write 
$\psi(u;\bftet)=\tet_1u+\ldots +\tet_ku^k$. Then, as in the analogous argument of 
\cite[Lemma 2.1]{Woo2021c}, it follows via orthogonality that for every integral shift $y$ 
with $1\le y\le X$, one has
\[
f_k(\bfalp;X)=\int_0^1\grf_y(\bfalp;\gam)K(\gam)\d\gam ,
\]
where
\begin{equation}\label{2.6}
\grf_y(\bfalp;\gam)=\sum_{1\le x\le 2X}e\left( \psi(x-y;\bfalp)+\gam(x-y)\right)
\end{equation}
and
\[
K(\gam)=\sum_{1\le z\le X}e(-\gam z).
\]
Write
\begin{equation}\label{2.7}
\grF_y(\bfalp;\bfgam)=\prod_{i=1}^s \grf_y(\bfalp;\gam_i)\grf_y(-\bfalp;-\gam_{s+i}),
\end{equation}
\[
\Ktil(\bfgam)=\prod_{i=1}^sK(\gam_i)K(-\gam_{s+i}),
\]
and
\begin{equation}\label{2.8}
\calI(\bfgam;y;\bfh)=\int_\grB \int_{[0,1)^{k-1}}\grF_y(\bfalp;\bfgam)e(-\bfalp \cdot \bfh)
\d\bfalp .
\end{equation}
Then we infer from (\ref{2.2}) that
\begin{equation}\label{2.9}
I_s(\grB;X;\bfh)=\int_{[0,1)^{2s}}\calI(\bfgam ;y;\bfh)\Ktil(\bfgam)\d\bfgam .
\end{equation}

\par Write $\d\bfalp_{k-1}$ as shorthand for $\d\alp_1\cdots \d\alp_{k-1}$. Then, by 
orthogonality, we discern from (\ref{2.7}) that
\begin{equation}\label{2.10}
\int_{[0,1)^{k-1}}\grF_y(\bfalp;\bfgam)e(-\bfalp \cdot \bfh)\d\bfalp_{k-1}
=\sum_{1\le \bfx\le 2X}\Del(\alp_k,\bfgam;\bfh,y),
\end{equation}
where $\Del(\alp_k,\bfgam;\bfh,y)$ is equal to
\[
e\biggl( \sum_{i=1}^s \left( \alp_k\left( (x_i-y)^k-(x_{s+i}-y)^k\right)+\left( \gam_i(x_i-y)
-\gam_{s+i}(x_{s+i}-y)\right)\right)-\alp_kh_k\biggr) ,
\]
when
\begin{equation}\label{2.11}
\sum_{i=1}^s\left( (x_i-y)^j-(x_{s+i}-y)^j\right)=h_j\quad (1\le j\le k-1),
\end{equation}
and otherwise $\Del(\alp_k,\bfgam;\bfh,y)$ is equal to $0$.\par

By applying the binomial theorem within (\ref{2.11}), we obtain the relations
\[
\sum_{i=1}^s(x_i^j-x_{s+i}^j)=\sum_{l=0}^{j-1}\binom{j}{l}h_{j-l}y^l\quad 
(1\le j\le k-1),
\]
and
\[
\sum_{i=1}^s(x_i^k-x_{s+i}^k)=\sum_{l=1}^{k-1}\binom{k}{l}h_{k-l}y^l
+\sum_{i=1}^s\left( (x_i-y)^k-(x_{s+i}-y)^k\right) .
\]
Define
\begin{equation}\label{2.12}
G(\bfalp;\bfh;\bfgam)=\sum_{1\le y\le X}
e\biggl( -\sum_{j=1}^k\alp_j\sum_{l=0}^{j-1}\binom{j}{l}
h_{j-l}y^l-y\Gam(\bfgam)\biggr),
\end{equation}
where
\[
\Gam(\bfgam)=\sum_{i=1}^s(\gam_i-\gam_{s+i}).
\]
Then we find from (\ref{2.7}) and (\ref{2.10}) that
\[
\sum_{1\le y\le X}\int_{[0,1)^{k-1}}\grF_y(\bfalp;\bfgam)
e(-\bfalp \cdot \bfh)\d\bfalp_{k-1}=\int_{[0,1)^{k-1}}\grF_0(\bfalp;\bfgam)
G(\bfalp;\bfh;\bfgam)\d\bfalp_{k-1}.
\]
On substituting this last relation into (\ref{2.8}) and thence into (\ref{2.9}), we obtain
\begin{align}
I_s(\grB;X;\bfh)&=\lfloor X\rfloor^{-1}\sum_{1\le y\le X}\int_{[0,1)^{2s}}
\calI(\bfgam;y;\bfh)\Ktil(\bfgam)\d\bfgam \notag\\
&\ll X^{-1}\int_{[0,1)^{2s}}|H(\bfgam)\Ktil(\bfgam)|\d\bfgam ,\label{2.13}
\end{align}
where
\begin{equation}\label{2.14}
H(\bfgam)=\int_\grB \int_{[0,1)^{k-1}}\grF_0(\bfalp;\bfgam)G(\bfalp;\bfh;\bfgam)\d\bfalp 
.
\end{equation}

\par Note next from (\ref{2.1}) and (\ref{2.6}) that $\grf_0(\bfalp;0)=f_k(\bfalp;2X)$. 
Thus, from the elementary inequality $|z_1\cdots z_n|\le |z_1|^n+\ldots +|z_n|^n$ and 
(\ref{2.7}), we deduce that
\[
|\grF_0(\bfalp;\bfgam)|\le \sum_{i=1}^{2s}|\grf_0(\bfalp;\gam_i)|^{2s}=
\sum_{i=1}^{2s}|f_k(\alp_k,\ldots ,\alp_2,\alp_1+\gam_i;2X)|^{2s}.
\]
Moreover, in view of (\ref{2.5}), we have $\nu_1(y;\bfh)=h_1$, so that 
$\nu_1(y;\bfh)$ is independent of $y$. Then, from (\ref{2.4}), (\ref{2.5}) and (\ref{2.12}) 
we see that
\[
|G(\bfalp;\bfh;\bfgam)=|g_k(\bfalp,\Gam(\bfgam);X)|.
\]
Define
\[
U_s(\grB)=\sup_{\Gam \in [0,1)}\int_\grB \int_{[0,1)^{k-1}}|f_k(\bfalp;2X)^{2s}
g_k(\bfalp,\Gam;X)|\d\bfalp ,
\]
and observe that $|G(\bfalp;\bfh;\bfgam)|$ is independent of $\alp_1$. Then a change of 
variable leads from (\ref{2.14}) to the upper bound
\begin{equation}\label{2.15}
|H(\bfgam)|\le \int_\grB \int_{[0,1)^{k-1}}|f_k(\bfalp;2X)^{2s}
g_k(\bfalp,\Gam(\bfgam);X)|\d\bfalp \le U_s(\grB).
\end{equation}
Recall next that
\[
\int_0^1|K(\gam)|\d\gam \ll \int_0^1\min\{ X,\|\gam\|^{-1}\}\d\gam \ll \log (2X).
\]
Then we perceive from (\ref{2.13}) and (\ref{2.15}) that
\[
I_s(\grB;X;\bfh)\ll X^{-1}U_s(\grB)\biggl( \int_0^1|K(\gam)|\d\gam \biggr)^{2s}\ll 
X^{-1}(\log (2X))^{2s}U_s(\grB).
\]
This completes the proof of the lemma.
\end{proof}

\section{Mixed mean value estimates} In this section we derive mixed mean value 
estimates involving $f(\bfalp;2X)$ and $g(\bfalp, \Gam;X)$. We apply these estimates 
in \S 4 to establish Theorems \ref{theorem1.1}, \ref{theorem1.2} and \ref{theorem1.3}. 
Throughout, we abbreviate $g_k(\bfalp,0;X)$ to $g_k(\bfalp;X)$.

\begin{lemma}\label{lemma3.1} Suppose that $k\ge 3$ and 
$\bfh\in \dbZ^k\setminus \{{\mathbf 0}\}$. Let $l$ be the smallest index having the 
property that $h_l\ne 0$. Then whenever $l<k$, and $u$ and $r$ are non-negative integers 
with $u\le l(l+1)/2$, one has
\[
\int_{[0,1)^k}|g_k(\bfalp;X)^{2r}f_k(\bfalp;2X)^{2u}|\d\bfalp \ll X^{u+\eps}\left( 
X^r+X^{2r-(k-l)(k-l+1)/2}\right).
\]
\end{lemma}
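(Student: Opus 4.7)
The plan is to separate the first $l$ variables $\alp_1,\ldots,\alp_l$ from the last $k-l$ variables $\alp_{l+1},\ldots,\alp_k$, exploiting the fact that $|g_k(\bfalp;X)|$ depends only on the latter. Indeed, since $h_1=\cdots=h_{l-1}=0$, formulae (\ref{2.4}) and (\ref{2.5}) give $\nu_j(y;\bfh)=0$ for $j<l$ and $\nu_l(y;\bfh)=h_l$ (independent of $y$); for $j>l$ the polynomial $\nu_j(y;\bfh)$ has exact degree $j-l$ with leading coefficient $\binom{j}{j-l}h_l$, which is a nonzero integer.

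For the first step I would integrate out $\alp_1,\ldots,\alp_l$ against the factor $|f_k(\bfalp;2X)|^{2u}$. Writing $F(\alp_{l+1},\ldots,\alp_k)$ for the resulting $l$-fold integral, orthogonality expresses $F$ as a sum of phase terms in $\alp_{l+1},\ldots,\alp_k$ indexed by pairs $(\bfx,\bfy)$ with $1\le x_i,y_i\le 2X$ and $\sum_{i=1}^u(x_i^j-y_i^j)=0$ for $1\le j\le l$. The triangle inequality bounds $|F|$ by the Vinogradov count $J_{u,l}(2X)$; since $u\le l(l+1)/2$, the resolved main conjecture (Bourgain--Demeter--Guth) yields $J_{u,l}(2X)\ll X^{u+\eps}$, uniformly in $\alp_{l+1},\ldots,\alp_k$. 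Consequently
\[
\int_{[0,1)^k}|g_k(\bfalp;X)^{2r}f_k(\bfalp;2X)^{2u}|\d\bfalp \ll X^{u+\eps}\int_{[0,1)^{k-l}}|g_k(\bfalp;X)|^{2r}\d\alp_{l+1}\cdots\d\alp_k.
\]

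For the second step I would convert the remaining integral into a standard Vinogradov mean value of degree $k-l$ via the substitution
\[
\beta_m=\sum_{j=l+m}^k\binom{j}{m}h_{j-m}\alp_j\qquad(1\le m\le k-l).
\]
Under this substitution the $y$-dependent polynomial in the exponent of $g_k$ reduces to the canonical form $\beta_1y+\ldots+\beta_{k-l}y^{k-l}$, up to a $y$-independent phase. The transition matrix is integer-valued and upper triangular with diagonal entries $\binom{l+m}{m}h_l$, so it has nonzero determinant. Such a map induces a surjective, Haar-measure-preserving endomorphism of $(\dbR/\dbZ)^{k-l}$, so
\[
\int_{[0,1)^{k-l}}|g_k(\bfalp;X)|^{2r}\d\alp_{l+1}\cdots\d\alp_k = \int_{[0,1)^{k-l}}\Big|\sum_{1\le y\le X}e(\beta_1y+\ldots+\beta_{k-l}y^{k-l})\Big|^{2r}\d\bfbet.
\]
Invoking Vinogradov's main conjecture in degree $k-l$ bounds the right-hand side by $X^{r+\eps}+X^{2r-(k-l)(k-l+1)/2+\eps}$, and multiplying by the earlier factor $X^{u+\eps}$ yields the lemma.

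The only point requiring more than routine book-keeping is the torus change-of-variables in the second step; this is the standard fact that an integer matrix of nonzero determinant induces a Haar-measure-preserving surjective endomorphism of the flat torus, reducing via Smith normal form to the one-variable identity $\int_0^1F(Nx)\d x=\int_0^1F(u)\d u$ for $1$-periodic $F$ and $N\in\dbZ\setminus\{0\}$.
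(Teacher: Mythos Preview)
Your proof is correct and follows essentially the same route as the paper: both separate the first $l$ degrees (bounding the $f$-contribution by $J_{u,l}(2X)\ll X^{u+\eps}$ via the main conjecture) from the last $k-l$ (bounding the $g$-contribution by $J_{r,k-l}(X)$). The only cosmetic difference is that the paper argues combinatorially---interpreting the integral by orthogonality as a count of solutions $(\bfx,\bfy,\bfw,\bfz)$, fixing $(\bfx,\bfy)$, and then taking linear combinations of the remaining equations to reduce to a degree-$(k-l)$ Vinogradov system---whereas you argue analytically by integrating out $\alp_1,\ldots ,\alp_l$ and performing the torus change of variables $\bfbet=A\bfalp$; these are dual descriptions of the same step.
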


\begin{proof} Suppose that $u$ is an integer with $0\le u\le l(l+1)/2$, and define
\[
I=\int_{[0,1)^k}|g_k(\bfalp;X)^{2r}f_k(\bfalp;2X)^{2u}|\d\bfalp .
\]
Recall the definition (\ref{2.5}) of the polynomial $\nu_j(y;\bfh)$ $(1\le j\le k)$. Then, by 
orthogonality, the mean value $I$ counts the integral solutions of the system
\begin{equation}\label{3.1}
\sum_{i=1}^u(x_i^j-y_i^j)=\sum_{m=1}^r\left( \nu_j(w_m;\bfh)-\nu_j(z_m;\bfh)\right) 
\quad (1\le j\le k),
\end{equation}
with $1\le \bfx,\bfy\le 2X$ and $1\le \bfw,\bfz\le X$. Our hypothesis that $h_j=0$ for 
$1\le j<l$ ensures that
\[
\nu_j(w_m;\bfh)-\nu_j(z_m;\bfh)=0\quad (1\le j\le l),
\]
and so we deduce from (\ref{3.1}) that
\begin{equation}\label{3.2}
\sum_{i=1}^u(x_i^j-y_i^j)=0\quad (1\le j\le l).
\end{equation}
Since $u\le l(l+1)/2$, we therefore deduce from the (now proven) main conjecture in 
Vinogradov's mean value theorem (see \cite{BDG2016} and \cite{Woo2016, Woo2019}) 
that the number of choices $I_0$ for $\bfx$ and $\bfy$ satisfies
\begin{equation}\label{3.3}
I_0\le J_{u,l}(2X;{\mathbf 0})\ll X^{u+\eps}.
\end{equation}

\par Fix any choice of $\bfx$ and $\bfy$ satisfying (\ref{3.2}). Then by taking appropriate 
linear combinations of the equations (\ref{3.1}), we find that there are integers 
$n_j=n_j(\bfx,\bfy;\bfh)$ for which
\begin{equation}\label{3.4}
\sum_{m=1}^r(w_m^j-z_m^j)=n_j(\bfx,\bfy;\bfh)\quad (1\le j\le k-l).
\end{equation}
For each fixed choice of $\bfx$ and $\bfy$, denote by $I_1=I_1(\bfx,\bfy)$ the number of 
solutions of the system (\ref{3.4}) with $1\le \bfz,\bfw\le X$. Then by orthogonality and the 
triangle inequality, we find that
\begin{align*}
I_1(\bfx,\bfy)&=\int_{[0,1)^{k-l}}|f_{k-l}(\bfalp;X)|^{2r}e(-\bfalp \cdot \bfn)\d\bfalp \\
&\le \int_{[0,1)^{k-l}}|f_{k-l}(\bfalp;X)|^{2r}\d\bfalp .
\end{align*}
Thus, again applying the (now proven) main conjecture in Vinogradov's mean value 
theorem, we infer that
\[
I_1(\bfx,\bfy)\le J_{r,k-l}(X;{\mathbf 0})\ll X^\eps\left( X^r+X^{2r-(k-l)(k-l+1)/2}\right) .
\]
On recalling (\ref{3.3}), we therefore conclude that
\[
I\le I_0\max_{\bfx,\bfy}I_1(\bfx,\bfy)\ll X^{u+\eps}\left( X^r+X^{2r-(k-l)(k-l+1)/2}
\right) .
\]
This completes the proof of the lemma.
\end{proof}

As an immediate consequence of the upper bound of Lemma \ref{lemma3.1}, we record 
the following estimate in which $g_k(\bfalp,\Gam;X)$ substitutes for $g_k(\bfalp;X)$.

\begin{lemma}\label{lemma3.2} Suppose that $k\ge 3$ and $\bfh\in \dbZ^k\setminus 
\{{\mathbf 0}\}$. Let $l$ be the smallest index having the property that $h_l\ne 0$. Then 
whenever $l<k$, and $u$ and $r$ are non-negative integers with $u\le l(l+1)/2$, one has
\[
\sup_{\Gam\in [0,1)}\int_{[0,1)^k}|g_k(\bfalp,\Gam;X)^{2r}f_k(\bfalp;2X)^{2u}|\d\bfalp 
\ll X^{u+\eps}\left( X^r+X^{2r-(k-l)(k-l+1)/2}\right).
\]
\end{lemma}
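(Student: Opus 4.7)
The plan is to derive Lemma \ref{lemma3.2} as an immediate corollary of Lemma \ref{lemma3.1} by observing that the shift parameter $\Gam$ enters the integrand only through a phase that is independent of $\bfalp$, and is therefore decoupled from the Diophantine structure by the triangle inequality. The generating function $g_k(\bfalp,\Gam;X)$ differs from $g_k(\bfalp;X)=g_k(\bfalp,0;X)$ only through the factor $e(y\Gam)$ attached to each summand indexed by $y$ in (\ref{2.4}).

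First I would expand $|g_k(\bfalp,\Gam;X)|^{2r}|f_k(\bfalp;2X)|^{2u}$ by writing each factor as a product of copies of the relevant generating function and its complex conjugate, introducing integer variables $\bfw,\bfz\in[1,X]^r$ and $\bfx,\bfy\in[1,2X]^u$. The $\bfalp$-dependent exponential is then identical to the one that arises in the analogous expansion for $g_k(\bfalp;X)$ used in the proof of Lemma \ref{lemma3.1}, while the presence of $\Gam$ contributes the extra $\bfalp$-independent phase $e\bigl(\Gam(w_1+\cdots+w_r-z_1-\cdots-z_r)\bigr)$.

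Next I would integrate over $\bfalp\in[0,1)^k$. Orthogonality applied to the variables $\alp_1,\ldots,\alp_k$ isolates the tuples satisfying the Diophantine system (\ref{3.1}), yielding the identity
\[
\int_{[0,1)^k}\bigl|g_k(\bfalp,\Gam;X)^{2r}f_k(\bfalp;2X)^{2u}\bigr|\d\bfalp =\sum_{(\bfx,\bfy,\bfw,\bfz)\in \calS}e\bigl(\Gam(W-Z)\bigr),
\]
where $\calS$ denotes the set of tuples satisfying (\ref{3.1}) with $1\le\bfx,\bfy\le 2X$ and $1\le\bfw,\bfz\le X$, and $W=w_1+\cdots+w_r$, $Z=z_1+\cdots+z_r$. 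Here I use that $|g^{2r}f^{2u}|=|g|^{2r}|f|^{2u}$ and that the left-hand side, being a non-negative real, coincides with its own absolute value.

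Finally, the triangle inequality gives
\[
\int_{[0,1)^k}\bigl|g_k(\bfalp,\Gam;X)^{2r}f_k(\bfalp;2X)^{2u}\bigr|\d\bfalp \le\#\calS,
\]
and $\#\calS$ is precisely the mean value bounded in Lemma \ref{lemma3.1}, namely the integral obtained by taking $\Gam=0$. Since the resulting estimate $X^{u+\eps}(X^r+X^{2r-(k-l)(k-l+1)/2})$ is uniform in $\Gam$, the supremum over $\Gam\in[0,1)$ introduces no additional factor. There is essentially no obstacle: the shift $\Gam$ is inert with respect to the orthogonality step, and the entire content of Lemma \ref{lemma3.2} is carried by Lemma \ref{lemma3.1}.
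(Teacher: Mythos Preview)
Your proposal is correct and follows essentially the same approach as the paper: both arguments expand the integrand, apply orthogonality in $\bfalp$ to reduce to a sum over solutions of the system (\ref{3.1}) weighted by the unimodular factor $e(\pm\Gam\sum_m(w_m-z_m))$, and then bound this weighted count by the unweighted count via the triangle inequality, thereby reducing to Lemma \ref{lemma3.1}. The only cosmetic difference is that the paper phrases the step as ``counts solutions with a unimodular weight'' in one sentence, whereas you spell out the expansion explicitly.
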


\begin{proof} The mean value
\[
\Tet_{u,r}(\Gam;X)=\int_{[0,1)^k}|g_k(\bfalp,\Gam;X)^{2r}f_k(\bfalp;2X)^{2u}|\d\bfalp
\]
counts the integral solutions $\bfx,\bfy,\bfz,\bfw$ of the system (\ref{3.1}) with weight
\[
e\biggl( -\Gam \sum_{m=1}^r(w_m-z_m)\biggr).
\]
Since this weight is unimodular, we deduce via orthogonality that
\[
\sup_{\Gam\in [0,1)}\Tet_{u,r}(\Gam;X)\le \int_{[0,1)^k}|g_k(\bfalp;X)^{2r}
f_k(\bfalp;2X)^{2u}|\d\bfalp .
\]
The conclusion of the lemma is now immediate from that of Lemma \ref{lemma3.1}.
\end{proof}

We next prepare for the proof of Theorem \ref{theorem1.2}. When $s\in \dbN$, write
\begin{equation}\label{3.5}
\Psi_s(\Gam;X)=\int_{[0,1)^k}|f_k(\bfalp;2X)^{2s}g_k(\bfalp,\Gam;X)|\d\bfalp .
\end{equation}

\begin{lemma}\label{lemma3.3} Suppose that $k\ge 3$, and $\bfh\in \dbZ^k\setminus 
\{ {\mathbf 0}\}$ satisfies the condition that $h_j=0$ for $j\le k-2$, but $h_{k-1}\ne 0$. 
Then whenever $s$ is an integer with $1\le s\le k(k-1)/2$, one has
\[
\sup_{\Gam\in [0,1)}\Psi_s(\Gam;X)\ll X^{s+\eps}.
\]
\end{lemma}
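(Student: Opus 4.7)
The plan rests on a single structural observation: under the hypothesis $h_1=\cdots=h_{k-2}=0$, the coefficients $\nu_j(y;\bfh)$ from (\ref{2.5}) collapse. Indeed, one checks that $\nu_j(y;\bfh)=0$ for $2\le j\le k-2$, that $\nu_{k-1}(y;\bfh)=h_{k-1}$ is independent of $y$, and that $\nu_k(y;\bfh)=h_k+kh_{k-1}y$. Substituting into (\ref{2.4}) and extracting the $y$-free phase $e(h_{k-1}\alp_{k-1}+h_k\alp_k)$, I obtain
\[
|g_k(\bfalp,\Gam;X)|=\Bigl|\sum_{1\le y\le X}e\bigl(y(\Gam+kh_{k-1}\alp_k)\bigr)\Bigr|\ll \min\bigl(X,\|\Gam+kh_{k-1}\alp_k\|^{-1}\bigr),
\]
so that $|g_k|$ is in fact independent of $\alp_1,\ldots,\alp_{k-1}$.

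Exploiting this decoupling, I would evaluate $\Psi_s(\Gam;X)$ by splitting the $\bfalp$-integration into the inner $(k-1)$-dimensional integral over $\alp_1,\ldots,\alp_{k-1}$ and the outer integral over $\alp_k$. For the inner integral, by orthogonality and the triangle inequality one has
\[
\int_{[0,1)^{k-1}}|f_k(\bfalp;2X)|^{2s}\,d\alp_1\cdots d\alp_{k-1}\le J_{s,k-1}(2X;{\mathbf 0}).
\]
Since the hypothesis $s\le k(k-1)/2$ matches exactly the critical range for degree $k-1$, the now-proven main conjecture in Vinogradov's mean value theorem yields the uniform estimate $J_{s,k-1}(2X;{\mathbf 0})\ll X^{s+\eps}$.

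It remains to integrate over $\alp_k$. Since $c=kh_{k-1}$ is a nonzero integer, a linear change of variable $v=c\alp_k$ combined with the $1$-periodicity of $\|\cdot\|$ reduces this to the standard bound
\[
\int_0^1\min\bigl(X,\|\Gam+c\alp_k\|^{-1}\bigr)\,d\alp_k=\int_0^1\min\bigl(X,\|\Gam+v\|^{-1}\bigr)\,dv\ll\log X.
\]
Multiplying these two estimates yields $\Psi_s(\Gam;X)\ll X^{s+\eps}\log X$, whence the desired bound follows after a routine adjustment of $\eps$, uniformly in $\Gam\in[0,1)$.

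The only really non-trivial step is spotting the simplification of $|g_k|$ to a one-dimensional geometric sum; once that decoupling is exposed, neither Lemma \ref{lemma2.1} nor the mixed mean value machinery of Lemmas \ref{lemma3.1}--\ref{lemma3.2} is needed, and the remainder of the argument is essentially classical.
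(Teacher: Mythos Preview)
Your proof is correct and follows essentially the same route as the paper's own argument: both exploit the collapse of $\nu_j(y;\bfh)$ under the hypothesis $h_1=\cdots=h_{k-2}=0$ to reduce $|g_k(\bfalp,\Gam;X)|$ to a geometric sum depending only on $\alp_k$, then integrate first over $\alp_1,\ldots,\alp_{k-1}$ (bounding by $J_{s,k-1}(2X;{\mathbf 0})\ll X^{s+\eps}$ via the main conjecture) and finally over $\alp_k$ (picking up a harmless $\log X$). Your closing remark that Lemmas \ref{lemma2.1}--\ref{lemma3.2} are not invoked here is also accurate and matches the paper's self-contained treatment of this lemma.
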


\begin{proof} With the hypotheses on $\bfh$ available from the statement of the lemma, it 
follows from (\ref{2.4}) and (\ref{2.5}) that
\begin{align*}
|g_k(\bfalp,\Gam;X)|&=\biggl| \sum_{1\le y\le X}e\left( (kh_{k-1}\alp_k+\Gam)y\right) 
\biggr| \\
&\ll \min\{X,\|kh_{k-1}\alp_k+\Gam\|^{-1}\}.
\end{align*}
Since $|g_k(\bfalp,\Gam;X)|$ is independent of $\alp_1,\ldots ,\alp_{k-1}$, we therefore 
perceive via orthogonality that
\begin{equation}\label{3.6}
\int_{[0,1)^{k-1}}|f_k(\bfalp;2X)^{2s}g_k(\bfalp,\Gam;X)|\d\bfalp_{k-1}\ll 
T\min \{ X,\|kh_{k-1}\alp_k+\Gam\|^{-1}\},
\end{equation}
where
\[
T=\int_{[0,1)^{k-1}}|f_k(\bfalp;2X)|^{2s}\d\bfalp_{k-1}
\]
counts the number of integral solutions of the system of equations
\[
\sum_{i=1}^s(x_i^j-y_i^j)=0\quad (1\le j\le k-1),
\]
with $1\le \bfx,\bfy\le X$, each solution being counted with weight
\begin{equation}\label{3.7}
e\biggl( \alp_k\sum_{i=1}^s(x_i^k-y_i^k)\biggr) .
\end{equation}

\par Since the weight (\ref{3.7}) is unimodular, we see that $T\ll J_{s,k-1}(X;\bf0)$. From 
the (now proven) main conjecture in Vinogradov's mean value theorem, we thus deduce 
from (\ref{3.5}) and (\ref{3.6}) via a change of variables that 
\begin{align*}
\Psi_s(\Gam;X)&\ll X^{s+\eps}\int_0^1\min \{ X,\|kh_{k-1}\alp_k+\Gam\|^{-1}\}\d\alp_k 
\\
&=X^{s+\eps}\int_0^1\min\{ X,\|\bet\|^{-1}\}\d\bet .
\end{align*}
Thus we conclude that
\[
\sup_{\Gam\in [0,1)}\Psi_s(\Gam;X)\ll X^{s+\eps}\log (2X)\ll X^{s+2\eps},
\]
and the proof of the lemma is complete.
\end{proof}

\section{The simplest subconvex bounds} We now attend to the matter of converting the 
auxiliary estimates of \S3, using the apparatus prepared in \S2, so as to establish 
Theorems \ref{theorem1.1}, \ref{theorem1.2} and \ref{theorem1.3}. We begin with the 
proof of Theorems \ref{theorem1.1} and \ref{theorem1.3}. We should emphasise here that 
our formulation of Lemma \ref{lemma2.1}, which we will shortly wield in earnest, bounds 
$I_s(\grB;X;\bfh)$ for any measurable set $\grB$. In the proofs of Theorems 
\ref{theorem1.1}, \ref{theorem1.2} and \ref{theorem1.3}, we make use of this lemma 
only when $\grB=[0,1)$. In such circumstances, one could do away with the elaborate 
arguments employed in the proof of Lemma \ref{lemma2.1}, making do only with simple 
arguments counting solutions of Diophantine systems. Indeed, this was the approach taken 
in the proof of \cite[Lemma 10.1]{Woo2012a}. We will, however, need the full force of 
Lemma \ref{lemma2.1} in \S5, and we express the hope that the greater flexibility of this 
lemma may inspire future refinement even to the results established in the present section. 

\begin{proof}[The proof of Theorems \ref{theorem1.1} and \ref{theorem1.3}] We suppose 
that $k\ge 3$, $\bfh\in \dbZ^k\setminus \{ {\mathbf 0}\}$, and that $l$ is the smallest 
index with $1\le l\le k$ having the property that $h_l\ne 0$. The hypotheses of Theorems 
\ref{theorem1.1} and \ref{theorem1.3} then permit us the assumption that $l<k$. On 
recalling (\ref{2.3}), we see that Lemma \ref{lemma2.1} delivers the bound
\begin{equation}\label{4.1}
J_{s,k}(X;\bfh)\ll X^{-1}(\log X)^{2s}\sup_{\Gam\in [0,1)}V_s(\Gam),
\end{equation}
where
\begin{equation}\label{4.2}
V_s(\Gam)=\int_{[0,1)^k}|f_k(\bfalp;2X)^{2s}g_k(\bfalp,\Gam;X)|\d\bfalp .
\end{equation}

\par We pursue two different analyses of the mean value (\ref{4.2}), the first of which 
delivers Theorem \ref{theorem1.1}, and the second Theorem \ref{theorem1.3}. Write 
$R=(k-l)(k-l+1)$,
\begin{equation}\label{4.3}
u=\min \left\{ sR,l(l+1)/2\right\}\quad \text{and}\quad v=\frac{s-u/R}{1-1/R}.
\end{equation}
Then it follows from an application of H\"older's inequality in (\ref{4.2}) that
\begin{equation}\label{4.4}
V_s(\Gam)\le W_{u,1}(\Gam)^{1/R}W_{v,2}^{1-1/R},
\end{equation}
where
\begin{equation}\label{4.5}
W_{u,1}(\Gam)=\int_{[0,1)^k}|g_k(\bfalp,\Gam;X)^Rf_k(\bfalp;2X)^{2u}|\d\bfalp 
\end{equation}
and
\begin{equation}\label{4.6}
W_{v,2}=\int_{[0,1)^k}|f_k(\bfalp;2X)|^{2v}\d\bfalp .
\end{equation}

\par Since $R=(k-l)(k-l+1)$ is an even integer and $1\le u\le l(l+1)/2$, an application of 
Lemma \ref{lemma3.2} to (\ref{4.5}) reveals that
\begin{equation}\label{4.7}
\sup_{\Gam\in [0,1)}W_{u,1}(\Gam)\ll X^{u+\frac{1}{2}R+\eps}.
\end{equation}
Moreover, by applying the (now proven) main conjecture in Vinogradov's mean value 
theorem, it follows from (\ref{4.6}) that
\begin{equation}\label{4.8}
W_{v,2}\ll X^\eps (X^v+X^{2v-k(k+1)/2}).
\end{equation}
We therefore deduce from (\ref{4.3}) and (\ref{4.4}) that
\begin{align*}
V_s(\Gam)&\ll X^\eps \left( X^{u+R/2}\right)^{1/R}\left( X^v+X^{2v-k(k+1)/2}
\right)^{1-1/R}\\
&\ll X^{s+\frac{1}{2}+\tet+\eps},
\end{align*}
where
\[
\tet=(1-1/R)\max\{ 0,v-k(k+1)/2\} .
\]
By substituting this estimate for $V_s(\Gam)$ into (\ref{4.1}), we conclude thus far that
\begin{equation}\label{4.9}
J_{s,k}(X;\bfh)\ll X^{s-\frac{1}{2}+\tet+\eps}.
\end{equation}
One has $\tet=0$ when $v\le k(k+1)/2$, and by (\ref{4.3}) such is the case so long as
\[
s\le \frac{u}{R}+\tfrac{1}{2}k(k+1)(1-1/R),
\]
a constraint guaranteed to hold provided that the hypothesis (\ref{1.2}) is in force. The 
conclusion (\ref{1.3}) of Theorem \ref{theorem1.1} therefore follows at once from 
(\ref{4.9}) in this situation in which $\tet=0$. Notice here that
\[
\tfrac{1}{2}k(k+1)-\frac{k(k+1)-l(l+1)}{2(k-l)(k-l+1)}\le \tfrac{1}{2}k(k+1)-1
\]
if and only if $2(k-l)(k-l+1)\le k(k+1)-l(l+1)$, confirming that the condition (\ref{1.2}) is met 
for all $s<k(k+1)/2$ whenever $(k+1-3l)(k-l)\le 0$. This confirms that the estimate 
(\ref{1.3}) does indeed hold, when $s<k(k+1)/2$, provided that $1\le l\le (k+1)/3$, 
completing the proof of Theorem \ref{theorem1.1}.\par

We now turn to the proof of Theorem \ref{theorem1.3}. Here, in view of the conclusion of 
Theorem \ref{theorem1.1}, we already have the bound 
$J_{s,k}(X;\bfh)\ll X^{s-1/2+\eps}$ when the constraint (\ref{1.2}) is in force. We may 
therefore suppose henceforth that
\[
\tfrac{1}{2}k(k+1)-\frac{k(k+1)-l(l+1)}{2R}<s<\tfrac{1}{2}k(k+1),
\]
whence
\begin{equation}\label{4.10}
\frac{k(k+1)-2s}{k(k+1)-l(l+1)}<\frac{1}{R}.
\end{equation}
Write
\[
u=\tfrac{1}{2}l(l+1),\quad v=\tfrac{1}{2}k(k+1)\quad \text{and}\quad 
a=\frac{k(k+1)-2s}{2(v-u)},
\]
and recall the notation introduced in (\ref{4.5}) and (\ref{4.6}). Then since it follows from 
(\ref{4.10}) that $0\le a<1/R$, an application of H\"older's inequality in (\ref{4.2}) shows 
in this situation that
\[
V_s(\Gam)\le \biggl( \sup_{\bfalp\in [0,1)^k}|g_k(\bfalp,\Gam;X)|\biggr)^{1-Ra}
W_{u,1}(\Gam)^aW_{v,2}^{1-a}.
\]
Thus, we deduce from (\ref{4.7}) and (\ref{4.8}) that
\begin{align*}
\sup_{\Gam\in [0,1)}V_s(\Gam)&\ll X^{1-Ra+\eps}\left( X^{u+R/2}\right)^a
(X^v)^{1-a}\\
&\ll X^{s+1-\frac{1}{2}Ra+\eps}.
\end{align*}

\par We therefore conclude from (\ref{4.1}) that in this scenario, one has
\[
J_{s,k}(X;\bfh)\ll X^{s-\del(s,k,l)+\eps},
\]
where
\[
\del(s,k,l)=\tfrac{1}{2}Ra=\tfrac{1}{2}(k-l)(k-l+1)\frac{k(k+1)-2s}{k(k+1)-l(l+1)}.
\]
This completes the proof of Theorem \ref{theorem1.3}.
\end{proof}

We complete this section by establishing Theorem \ref{theorem1.2}, exploiting the fact that 
when $h_j=0$ for $1\le j\le k-2$ and $h_{k-1}\ne 0$, then the generating function 
$g_k(\bfalp,\tet;X)$ is a linear exponential sum in the underlying variable $y$.

\begin{proof}[The proof of Theorem \ref{theorem1.2}] We begin with a preliminary 
simplification. We work under the hypotheses of the statement of Theorem 
\ref{theorem1.2}, and consider the $(l+1)$-tuple $\bfh'=(h_1,\ldots ,h_{l+1})$, in which 
we may assume that $h_j=0$ for $1\le j<l$ but $h_l\ne 0$. By discarding the equations in 
(\ref{1.1}) of degree exceeding $l+1$, we see that $J_{s,k}(X;\bfh)\le J_{s,l+1}(X;\bfh')$. 
But when $1\le s\le l(l+1)/2$, it follows from (\ref{2.3}) by applying Lemma 
\ref{lemma2.1} with $l+1$ in place of $k$ that
\[
J_{s,l+1}(X;\bfh')\ll X^{-1}(\log X)^{2s}\sup_{\Gam\in [0,1)}\int_{[0,1)^{l+1}}
|f_{l+1}(\bfalp;2X)^{2s}g_{l+1}(\bfalp,\Gam;X)|\d\bfalp .
\]
Hence, by availing ourselves of Lemma \ref{lemma3.3} we obtain the upper bound
\[
J_{s,k}(X;\bfh)\le J_{s,l+1}(X;\bfh')\ll X^{s-1+\eps}.
\]
The conclusion of Theorem \ref{theorem1.2} follows at once.
\end{proof}

\section{A conditional asymptotic formula, I: minor arcs} Our goal in this section is to 
indicate how, equipped with mean value estimates conjectured to hold that fall short of 
breaking the convexity barrier, one may achieve subconvex minor arc estimates that 
deliver asymptotic formulae for $J_{s,k}(X;\bfh)$ when $s=k(k+1)/2$. Thereby, we prove 
Theorem \ref{theorem1.4}.\par

We begin by extracting from \S8 an estimate sufficient for our purposes. Here, in order to 
simplify our exposition, we introduce some notation. When $A$ is a fixed positive number 
and $0\le j\le k$, we denote by $\grU_j=\grU_j(A)$ the interval 
$\grU_j(A)=[-AX^{-j},AX^{-j}]$. Then, when $0\le m\le k$, we write
\[
\grV_m(A)=\grU_m(A)\times \grU_{m-1}(A)\times \cdots \times \grU_1(A)\times 
[0,1)^{k-m}.
\]
In what follows, we shall refer to Conjecture \ref{conjecture8.1} as the {\it extended main 
conjecture in Vinogradov's mean value theorem}.

\begin{lemma}\label{lemma5.1} Assume the extended main conjecture in Vinogradov's 
mean value theorem. Let $A$ be a fixed positive number. Suppose that $s$ is a positive 
number and $m$ is an integer satisfying $0\le m\le k$. Then, if either
\[
s\ge \tfrac{1}{4}k(k+1)+1\quad \text{or}\quad m(m+1)\le \tfrac{1}{2}k(k+1)-2,
\]
one has
\[
\int_{\grV_m(A)}|f_k(\bfalp;X)|^{2s}\d\bfalp \ll X^\eps ( X^{s-m(m+1)/2}+
X^{2s-k(k+1)/2}).
\]
\end{lemma}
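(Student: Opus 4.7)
The plan is to deduce this bound as an application of the small cap estimates for $f_k$ developed in \S8 from Conjecture \ref{conjecture8.1}.

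First I would observe that $\grV_m(A)$ is precisely the natural major-arc-type neighborhood in which $m$ coordinates of $\bfalp$ are localised to boxes of their characteristic scale $X^{-j}$, while the remaining $k-m$ coordinates range over the full torus. By orthogonality, the mean value on the left counts integer tuples in $[1,X]^{2s}$ satisfying the Vinogradov equations for the free coordinates exactly, with the other equations smoothed by Fourier kernels of widths $X^j$. The extended main conjecture predicts a bound of the anticipated shape $X^\eps(X^{s-m(m+1)/2}+X^{2s-k(k+1)/2})$: the first term accounts for the diagonal contribution $\bfx\equiv \bfy$ weighted by the volume of $\grV_m(A)$, while the second tracks the generic heuristic count for the full system.

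The main step is to invoke the corresponding consequence of Conjecture \ref{conjecture8.1} proved in the appendix. The two parameter conditions $m(m+1)\le k(k+1)/2-2$ and $s\ge k(k+1)/4+1$ demarcate the two regimes in which such a bound is accessible: the first ensures that enough equations remain free after the low-degree coordinates are localised, so that the conjecture applies directly to the resulting partial Vinogradov mean value; the second pushes $s$ above a threshold at which H\"older's inequality splits $|f_k|^{2s}$ as $|f_k|^{k(k+1)/2}\cdot |f_k|^{2s-k(k+1)/2}$, letting one absorb the second factor by the trivial bound $|f_k|\ll X$ and reduce to the critical exponent where the conjecture is in force. In either situation the combination of the conjectural critical-exponent estimate with the volume of $\grV_m(A)$ produces the stated two-term bound.

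The main obstacle will be checking that the small cap estimates in \S8 are formulated with sufficient uniformity in $A$ and in $m$, and verifying that the transitional regime $m(m+1)\approx k(k+1)/2$ is actually covered by the union of the two hypotheses. In particular, confirming that the slack of two units in the inequality $m(m+1)\le k(k+1)/2-2$ is exactly what the convergence of the auxiliary mean values underlying Conjecture \ref{conjecture8.1} demands is the key bookkeeping to carry out carefully.
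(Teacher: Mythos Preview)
Your overall instinct---that the lemma is a direct specialisation of the material in \S8 on the extended main conjecture---is correct, and the paper's proof is indeed short. However, you have the mechanism behind the two hypotheses reversed, and the H\"older argument you sketch for large $s$ does not give the stated bound.

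In the paper's argument, the condition $s\ge \tfrac{1}{4}k(k+1)+1$ is precisely the hypothesis of Conjecture~\ref{conjecture8.1}, so in that regime the conjecture applies \emph{directly} to the set $\grB=\grV_m(A)$ with no further manipulation; one simply inserts $\text{mes}(\grV_m(A))\ll_A X^{-m(m+1)/2}$ into the conclusion. There is no need to split $|f_k|^{2s}$ via H\"older and reduce to a fixed exponent. In fact, your proposed splitting $|f_k|^{2s}=|f_k|^{k(k+1)/2}\cdot |f_k|^{2s-k(k+1)/2}$ with the trivial bound on the second factor would produce a first term of order $X^{2s-k(k+1)/4-1-m(m+1)/2}$, which is strictly worse than $X^{s-m(m+1)/2}$ once $s>\tfrac{1}{4}k(k+1)+1$.

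Conversely, the condition $m(m+1)\le \tfrac{1}{2}k(k+1)-2$ is not about ``enough equations remaining free''; it is exactly the inequality $\text{mes}(\grV_m(A))\asymp X^{-m(m+1)/2}\gg X^{1-k(k+1)/4}$, which is the measure hypothesis (\ref{8.4}) of Conjecture~\ref{conjecture8.2}. That conjecture is what handles \emph{small} $s$, and it is derived from Conjecture~\ref{conjecture8.1} in \S8 by precisely the H\"older-down-to-threshold manoeuvre you described---but applied in the opposite direction (lowering $s$ to the threshold, not raising it). So the H\"older step belongs to the second case, not the first, and the ``slack of two units'' you mention is exactly what makes the measure lower bound match (\ref{8.4}).
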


\begin{proof} It follows from the extended main conjecture in Vinogradov's mean value 
theorem that when either
\[
s\ge \tfrac{1}{4}k(k+1)+1\quad \text{or}\quad 
\text{mes}(\grV_m(A))\gg X^{1-k(k+1)/4},
\]
one has
\[
\int_{\grV_m(A)}|f_k(\bfalp;X)|^{2s}\d\bfalp \ll X^\eps (X^s\text{mes}(\grV_m(A))
+X^{2s-k(k+1)/2}).
\]
Such is immediate in the first case from Conjecture \ref{conjecture8.1}, and in the second 
case from Conjecture \ref{conjecture8.2}, which as explained in \S8 is a consequence of 
Conjecture \ref{conjecture8.1}. The upper bound presented in the lemma follows on 
observing that one has $\text{mes}(\grV_m(A))\ll_A X^{-m(m+1)/2}$.
\end{proof}

We first apply this estimate to obtain a bound for a mixed mean value.

\begin{lemma}\label{lemma5.2} Assume the extended main conjecture in Vinogradov's 
mean value theorem, and suppose that $k\ge 2$ and $\bfh\in \dbZ^k\setminus 
\{ {\mathbf 0}\}$. Let $l$ be the smallest index with $l\le k$ for which $h_l\ne 0$, and 
write $v=(k-l)(k-l+1)/2$. Then, provided that $s$ is a natural number with 
$s-v\ge \tfrac{1}{4}k(k+1)+1$, one has
\begin{equation}\label{5.1}
\int_{[0,1)^k}|f_k(\bfalp;2X)^{2s-2v}g_k(\bfalp;X)^{2v}|\d\bfalp \ll 
X^\eps (X^s+X^{2s-k(k+1)/2}).
\end{equation}
\end{lemma}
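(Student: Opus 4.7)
The plan is to apply a dyadic decomposition of the $\bfalp$-torus according to the size of $|g_k(\bfalp;X)|$, combined with the extended main conjecture in Vinogradov's mean value theorem (Conjecture \ref{conjecture8.1}) applied to $|f_k|^{2(s-v)}$. The hypothesis $s-v\ge k(k+1)/4+1$ is precisely what is needed for Conjecture \ref{conjecture8.1} to bound $\int_{\grB}|f_k|^{2(s-v)}\d\bfalp$ for an arbitrary measurable set $\grB\subseteq [0,1)^k$.

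The key preliminary estimate is the bound $\int_{[0,1)^k}|g_k(\bfalp;X)|^{2v}\d\bfalp \ll X^{v+\eps}$. To establish this, I would observe from (\ref{2.4}) and (\ref{2.5}) that, since $h_j=0$ for $j<l$, one has $\nu_j(y;\bfh)=0$ for $j<l$ and $\nu_l(y;\bfh)=h_l$ is constant in $y$. Consequently $|g_k(\bfalp;X)|$ is independent of $\alp_1,\ldots,\alp_l$, and introducing
\[
\gam_i=\sum_{j=l+i}^{k}\binom{j}{i}h_{j-i}\alp_j\quad (1\le i\le k-l),
\]
one has the identity $|g_k(\bfalp;X)|=|f_{k-l}(\gam_1,\ldots,\gam_{k-l};X)|$. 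The map $(\alp_{l+1},\ldots,\alp_k)\mapsto (\gam_1,\ldots,\gam_{k-l})$ is triangular with nonzero diagonal entries (each a nonzero integer multiple of $h_l$), so the change of variable together with the $1$-periodicity of $|f_{k-l}|$ gives
\[
\int_{[0,1)^k}|g_k|^{2v}\d\bfalp=\int_{[0,1)^{k-l}}|f_{k-l}(\bfgam';X)|^{2v}\d\bfgam'=J_{v,k-l}(X;{\mathbf 0}),
\]
which is $\ll X^{v+\eps}$ by the (now-proven) main conjecture in Vinogradov's mean value theorem applied at the critical exponent $v=(k-l)(k-l+1)/2$ for degree $k-l$.

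Next, I would partition $[0,1)^k$ into dyadic level sets $E_T=\{\bfalp: T\le |g_k(\bfalp;X)|<2T\}$ for dyadic $T\ge 1$, plus the residual set $\{|g_k|<1\}$, to obtain
\[
\int|f_k|^{2s-2v}|g_k|^{2v}\d\bfalp \ll \int|f_k|^{2(s-v)}\d\bfalp+\sum_{T \text{ dyadic}} T^{2v}\int_{E_T}|f_k|^{2(s-v)}\d\bfalp .
\]
On each $E_T$, Conjecture \ref{conjecture8.1} applied with exponent $s-v\ge k(k+1)/4+1$ yields
\[
\int_{E_T}|f_k|^{2(s-v)}\d\bfalp \ll X^\eps \bigl(X^{s-v}\,\text{mes}(E_T)+X^{2(s-v)-k(k+1)/2}\bigr).
\]
The bound now follows from two summations: first, since $|g_k|\ge T$ on $E_T$, one has $T^{2v}\text{mes}(E_T)\le \int_{E_T}|g_k|^{2v}\d\bfalp$, so
\[
\sum_{T} T^{2v}\text{mes}(E_T)\le \int_{[0,1)^k}|g_k|^{2v}\d\bfalp \ll X^{v+\eps}
\]
from the preliminary estimate; second, the dyadic sum $\sum_{T\le X}T^{2v}\ll X^{2v}$. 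Combining these yields
\[
\int|f_k|^{2s-2v}|g_k|^{2v}\d\bfalp \ll X^\eps\bigl(X^{s-v}\cdot X^v+X^{2(s-v)-k(k+1)/2}\cdot X^{2v}\bigr)=X^\eps\bigl(X^s+X^{2s-k(k+1)/2}\bigr),
\]
which is the desired conclusion (\ref{5.1}).

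The main conceptual obstacle is the first step: identifying $|g_k(\bfalp;X)|$ with a Vinogradov Weyl sum of reduced degree $k-l$ via the triangular change of variable, and thereby controlling its $L^{2v}$-norm at the critical exponent using the (unconditional) main conjecture for degree $k-l$. Once this identification and the extended main conjecture are in hand, the argument reduces to routine dyadic pigeonholing. The critical synergy is that $v$ is the critical exponent for the lower-degree problem, so the integrated gain of $X^v$ from $\int|g_k|^{2v}$ is exactly what converts the otherwise-trivial factor $X^{2v}$ into the target factor $X^v$ in the first term.
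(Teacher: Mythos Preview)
Your argument is correct, but it takes a genuinely different route from the paper's proof.

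The paper proceeds by interpreting the mixed mean value $I$ on the left of (\ref{5.1}) as counting integral solutions of the Diophantine system (\ref{5.2}). It then observes that the $g_k$-variables $\bfz,\bft$ force the $f_k$-variables $\bfx,\bfy$ to satisfy $\sum_{i}(x_i^j-y_i^j)=0$ for $j\le l$ and $|\sum_i(x_i^j-y_i^j)|\ll_\bfh X^{j-l}$ for $l<j\le k$. Via the standard box-counting device (Watt's lemma), this places the $f_k$-integral inside the specific box set $\grV_{k-l}(A)$, and the paper then invokes Lemma \ref{lemma5.1}, which is the extended main conjecture restricted to such box sets. The bound $\rho(\bfn)\ll X^{v+\eps}$ on the $g_k$-side plays the same role as your preliminary $L^{2v}$-estimate.

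Your approach instead slices $[0,1)^k$ into the level sets $E_T=\{T\le |g_k|<2T\}$ and applies Conjecture \ref{conjecture8.1} directly to each $E_T$. This is cleaner and avoids the Diophantine unpacking and the Watt-type lemma entirely. The trade-off is that your argument requires the extended main conjecture for \emph{arbitrary} measurable sets, whereas the paper's argument only needs it for the explicit box sets $\grV_m(A)$ of Lemma \ref{lemma5.1}. Since the box-set version is considerably closer to what current small-cap decoupling technology (as in \cite{DGW2020}) can hope to establish, the paper's route has the advantage of resting on a strictly weaker, and potentially more accessible, hypothesis. Your route, on the other hand, makes transparent that the result is a formal consequence of Conjecture \ref{conjecture8.1} together with the unconditional $L^{2v}$-bound for $g_k$.
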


\begin{proof} It follows from orthogonality that the mean value $I$ on the left hand side of 
(\ref{5.1}) counts the number of integral solutions of the system of equations
\begin{equation}\label{5.2}
\sum_{i=1}^{s-v}(x_i^j-y_i^j)=\sum_{m=1}^v(\nu_j(z_m;\bfh)-\nu_j(t_m;\bfh))\quad 
(1\le j\le k),
\end{equation}
with $1\le \bfx,\bfy\le 2X$ and $1\le \bfz,\bft\le X$. When $1\le j\le k$ and $n_j\in \dbZ$, 
denote by $\rho(\bfn)$ the number of solutions of the system of equations
\[
\sum_{m=1}^v (\nu_j(z_m;\bfh)-\nu_j(t_m;\bfh))=n_j\quad (1\le j\le k),
\]
with $1\le \bfz,\bft\le X$. Then, by orthogonality, one has
\[
\rho(\bfn)=\int_{[0,1)^k}|g_k(\bfalp;X)|^{2v}e(-\bfalp \cdot \bfn)\d\bfalp .
\]
Since the hypotheses of the lemma imply that $h_j=0$ for $1\le j<l$, and $h_l\ne 0$, we 
find from (\ref{2.5}) via the triangle inequality and a change of variables that
\begin{equation}\label{5.3}
\rho(\bfn)\le \int_{[0,1)^k}|g_k(\bfalp;X)|^{2v}\d\bfalp =\int_{[0,1)^{k-l}}
|f_{k-l}(\bfalp;X)|^{2v}\d\bfalp .
\end{equation}
In order to explain the origin of the rightmost mean value in (\ref{5.3}), observe that since 
$h_j=0$ for $1\le j<l$, the polynomial $\nu_j(z;\bfh)$ is non-zero only when $j\ge l$, in 
which case its leading term is $\binom{j}{l}h_lz^{j-l}$. Thus, the first integral in (\ref{5.3}) 
counts the number of integral solutions of the system of equations
\[
\sum_{m=1}^v(z_m^j-t_m^j)=0\quad (1\le j\le k-l),
\]
with $1\le \bfz,\bft\le X$. By orthogonality, the second integral in (\ref{5.3}) counts precisely 
these solutions, justifying the conclusion.\par

We thus have $\rho(\bfn)\le J_{k-l,v}(X;{\mathbf 0})$. Hence, by the (now confirmed) main 
conjecture in Vinogradov's mean value theorem, we deduce that 
$\rho(\bfn)\ll X^{v+\eps}$. We now return to (\ref{5.2}) and note that when 
$1\le \bfz,\bft\le X$, one has
\[
\sum_{m=1}^v(\nu_j(z_m;\bfh)-\nu_j(t_m;\bfh))\ll_\bfh X^{j-l}\quad (l+1\le j\le k)
\]
and
\[
\sum_{m=1}^v(\nu_j(z_m;\bfh)-\nu_j(t_m;\bfh))=0\quad (1\le j\le l).
\]
Hence, in each solution $\bfx,\bfy,\bfz,\bft$ counted by $I$, there are positive numbers 
$C_j(\bfh)$ for which
\begin{equation}\label{5.4}
\biggl| \sum_{i=1}^{s-v}(x_i^j-y_i^j)\biggr| \le C_j(\bfh)X^{j-l}\quad (l+1\le j\le k)
\end{equation}
and
\begin{equation}\label{5.5}
\sum_{i=1}^{s-v}(x_i^j-y_i^j)=0\quad (1\le j\le l).
\end{equation}
Let $I_1$ denote the number of integral solutions of the system (\ref{5.4}) and (\ref{5.5}) 
with $1\le \bfx,\bfy\le 2X$. Then we deduce that
\begin{equation}\label{5.6}
I\le I_1\max_\bfn \rho(\bfn)\ll X^{v+\eps}I_1.
\end{equation}

\par Next we examine the system (\ref{5.4}) and (\ref{5.5}). A standard argument (see for 
example \cite[Lemma 2.1]{Wat1989} shows that
\[
I_1\ll \biggl( \prod_{j=l+1}^kC_j(\bfh)X^{j-l}\biggr) \int_{\grV_{k-l}(A)}
|f_k(\bfalp ;X)|^{2s-2v}\d\bfalp ,
\] 
in which $\grV_{k-l}(A)$ is defined as in the preamble to the statement of Lemma 
\ref{lemma5.1}, and $A=\max \{ C_{l+1}(\bfh),\ldots ,C_k(\bfh)\}$. Thus we deduce that
\[
I_1\ll_\bfh X^v\int_{\grV_{k-l}(A)}|f_k(\bfalp;X)|^{2s-2v}\d\bfalp ,
\]
whence, in view of (\ref{5.6}),
\[
I\ll X^{2v+\eps}\int_{\grV_{k-l}(A)}|f_k(\bfalp;X)|^{2s-2v}\d\bfalp .
\]
We now invoke Lemma \ref{lemma5.1} to bound the mean value on the right hand side 
here. On noting that $\text{mes}(\grV_{k-l}(A))\ll X^{-v}$, we thus deduce that
\[
I\ll X^{2v+\eps}(X^{s-2v}+X^{2s-2v-k(k+1)/2})\ll X^\eps (X^s+X^{2s-k(k+1)/2}).
\]
This completes the proof of the lemma.
\end{proof}

We apply this estimate in combination with Lemma \ref{lemma2.1} to obtain an acceptable 
minor arc bound of use in our application of the Hardy-Littlewood method. When $Q$ is a 
real parameter with $1\le Q\le X$, we define the set of major arcs $\grM(Q)$ to be the 
union of the arcs
\[
\grM(q,a)=\{ \alp\in [0,1):|q\alp -a|\le QX^{-k}\},
\]
with $0\le a\le q\le Q$ and $(a,q)=1$. We then define the complementary set of minor arcs 
$\grm(Q)=[0,1)\setminus \grM(Q)$.

\begin{lemma}\label{lemma5.3} Assume the extended main conjecture in Vinogradov's 
mean value theorem, and suppose that $k\ge 3$ and $\bfh\in \dbZ^k$. Let $s$ be a 
natural number with $s\ge k(k+1)/2$, and put $\del=2/(k^2(k-1)^2)$. Then provided that 
$h_l\ne 0$ for some index $l$ with $1\le l<k$, one has
\[
I_s(\grm(Q);X;\bfh)\ll X^{2s-\frac{1}{2}k(k+1)+\eps}Q^{-\del}.
\]
\end{lemma}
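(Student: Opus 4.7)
The plan is to combine Lemma \ref{lemma2.1}, H\"older's inequality, the mixed mean value estimate of Lemma \ref{lemma5.2}, and a Weyl-type pointwise bound for $f_k$ on the minor arcs. First, Lemma \ref{lemma2.1} with $\grB=\grm(Q)$ gives
\[
I_s(\grm(Q);X;\bfh)\ll X^{-1+\eps}\sup_{\Gam\in[0,1)}\int_{\grm(Q)}\int_{[0,1)^{k-1}}|f_k(\bfalp;2X)^{2s}g_k(\bfalp,\Gam;X)|\d\bfalp.
\]
Set $v=\tfrac{1}{2}(k-l)(k-l+1)$. Writing
\[
|f|^{2s}|g|=\bigl(|f|^{2s}\bigr)^{(2v-1)/(2v)}\bigl(|f|^{2s}|g|^{2v}\bigr)^{1/(2v)}
\]
and applying H\"older's inequality with conjugate exponents $2v/(2v-1)$ and $2v$ bounds the inner integral by the product
\[
\biggl(\int_{\grm(Q)}\int_{[0,1)^{k-1}}|f|^{2s}\d\bfalp\biggr)^{(2v-1)/(2v)}\biggl(\int_{[0,1)^k}|f|^{2s}|g|^{2v}\d\bfalp\biggr)^{1/(2v)}.
\]

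Next I would bound each factor separately. Lemma \ref{lemma5.2} applied with parameter $s+v$ in place of its $s$ (whose hypothesis $s\ge k(k+1)/4+1$ holds because $s\ge k(k+1)/2$ and $k\ge 3$) treats the second factor: for $s\ge k(k+1)/2$ the dominant term is $X^{2s+2v-k(k+1)/2+\eps}$. For the first factor, a Weyl-type inequality applied to the leading $k$-th power phase delivers $|f_k(\bfalp;2X)|\ll X^{1+\eps}Q^{-\sig}$ uniformly in $\alp_1,\ldots,\alp_{k-1}$ for $\alp_k\in\grm(Q)$, for an appropriate $\sig>0$. Extracting two such factors from $|f|^{2s}$ and estimating the remaining $\int_{[0,1)^k}|f|^{2s-2}\d\bfalp$ via the (now proven) main conjecture in Vinogradov's mean value theorem yields
\[
\int_{\grm(Q)}\int_{[0,1)^{k-1}}|f|^{2s}\d\bfalp\ll X^{2s-k(k+1)/2+\eps}Q^{-2\sig}.
\]

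Substituting both bounds into the H\"older estimate, the $X$-exponents telescope cleanly to $2s-k(k+1)/2+1$, and the total saving is $Q^{-2\sig(2v-1)/(2v)}$. Dividing by $X$ as dictated by Lemma \ref{lemma2.1} then recovers the statement with $\del=2\sig(2v-1)/(2v)$. The principal obstacle is calibrating the Weyl exponent $\sig$ so that this product reaches the specific value $2/(k^2(k-1)^2)$ uniformly across the range of admissible $l$; the smallness of $(2v-1)/(2v)$ when $l$ is close to $k-1$ (so $v=1$) is the tight case, and one must ensure the available pointwise bound is sharp enough there. One expects a classical Weyl or Vinogradov-style minor-arc bound to furnish exactly the quoted $\del$, but the exponent bookkeeping is the delicate part of the argument.
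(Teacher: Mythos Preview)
Your H\"older split has a genuine gap in the first factor. At the critical value $s=k(k+1)/2$, extracting two pointwise factors and invoking the main conjecture on the remainder gives
\[
\int_{\grm(Q)}\int_{[0,1)^{k-1}}|f|^{2s}\d\bfalp\le\Bigl(\sup_{\grm}|f|\Bigr)^{2}\int_{[0,1)^k}|f|^{2s-2}\d\bfalp\ll X^{2+\eps}Q^{-2\sig}\cdot X^{s-1+\eps},
\]
because $s-1<k(k+1)/2$ forces the diagonal term $X^{s-1}$ to dominate in $J_{s-1,k}(2X)$. Thus the first factor is only $\ll X^{s+1+\eps}Q^{-2\sig}$, a full power of $X$ worse than the $X^{2s-k(k+1)/2+\eps}Q^{-2\sig}=X^{s+\eps}Q^{-2\sig}$ you assert. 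More generally, extracting any $2m>0$ pointwise factors at $s=k(k+1)/2$ yields $X^{s+m+\eps}Q^{-2m\sig}$, so no choice of $m$ repairs this; your $X$-exponents therefore do not telescope to $2s-k(k+1)/2+1$, and the lemma fails precisely at the value of $s$ that feeds into Theorem~\ref{theorem1.4}.

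The paper circumvents this with a three-way H\"older split rather than your two-way one. Writing $u=k(k+1)/2$ and $v=(k-l)(k-l+1)/2$, it bounds $V_s=\int_{\grm\times[0,1)^{k-1}}|f|^{2s}|g|$ by $V_0^{\ome_0}V_1^{\ome_1}V_2^{\ome_2}$, where $V_0=\sup_{\grm}|f|$, $V_1=\int_{[0,1)^k}|f|^{2u}$, $V_2=\int_{[0,1)^k}|f|^{2u-2}|g|^{2v}$, with weights $\ome_0=2s-2u+1/v$, $\ome_1=1-1/(2v)$, $\ome_2=1/(2v)$. The point is that all minor-arc saving resides in the pointwise factor $V_0$, while \emph{both} integrals $V_1,V_2$ are over the full cube, where no Weyl saving is needed. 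Crucially $V_2$ carries only $2u-2$ copies of $|f|$ (not your $2s$), so Lemma~\ref{lemma5.2} applies at a fixed moment and gives $V_2\ll X^{u+2v-2+\eps}$. With the Weyl exponent $\sig=1/(k(k-1))$ one obtains $\del=\ome_0\sig\ge 1/(vk(k-1))\ge 2/(k^2(k-1)^2)$; the worst case is $l=1$ (so $v=k(k-1)/2$), not $l=k-1$ as you anticipated.
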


\begin{proof} We find from Lemma \ref{lemma2.1} that
\begin{equation}\label{5.7}
I_s(\grm(Q);X;\bfh)\ll X^{\eps-1}\sup_{\Gam\in [0,1)}V_s(X;\bfh),
\end{equation}
where
\[
V_s(X;\bfh)=\int_{\grm(Q)}\int_{[0,1)^{k-1}}|f_k(\bfalp;2X)^{2s}g_k(\bfalp,\Gam;X)|
\d\bfalp .
\]
Write $v=(k-l)(k-l+1)/2$ and $u=k(k+1)/2$, and put
\[
\ome_0=2s-k(k+1)+1/v,\quad \ome_1=1-1/(2v)\quad \text{and}\quad \ome_2=1/(2v).
\]
Then for $s\ge k(k+1)/2$, an application of H\"older's inequality yields the bound
\begin{equation}\label{5.8}
V_s(X;\bfh)\le V_0^{\ome_0}V_1^{\ome_1}V_2^{\ome_2},
\end{equation}
where
\begin{align*}
V_0&=\sup_{\bfalp\in [0,1)^{k-1}\times \grm(Q)}|f_k(\bfalp;2X)|,\\
V_1&=\int_{[0,1)^k}|f_k(\bfalp;2X)|^{2u}\d\bfalp ,\\
V_2&=\int_{[0,1)^k}|f_k(\bfalp;2X)^{2u-2}g_k(\bfalp,\Gam;X)^{2v}|\d\bfalp .
\end{align*}

\par It follows from \cite[Lemma 2.2]{Woo2021b} that $V_0\ll X^{1+\eps}Q^{-\sig}$, 
where $\sig=1/(k(k-1))$. Also, from the (now confirmed) main conjecture in Vinogradov's 
mean value theorem, we have $V_1\ll X^{u+\eps}$, whilst Lemma \ref{lemma5.2} delivers 
the conditional bound $V_2\ll X^{2v+u-2+\eps}$ by the now familiar routine. Thus we 
deduce from (\ref{5.8}) that
\begin{equation}\label{5.9}
V_s(X;\bfh)\ll X^{2s+1-u+\eps}Q^{-\ome_0\sig}\ll 
X^{2s+1-\frac{1}{2}k(k+1)+\eps}Q^{-\del},
\end{equation}
where
\[
\del=\frac{1}{vk(k-1)}\ge \frac{2}{k^2(k-1)^2}.
\]
The proof of the lemma is completed by substituting (\ref{5.9}) into (\ref{5.7}).
\end{proof}

\section{A conditional asymptotic formula, II: the endgame} Equipped now with the 
conditional minor arc estimate supplied by Lemma \ref{lemma5.3}, our proof of Theorem 
\ref{theorem1.4} follows the argument applied in our previous work 
\cite[\S\S5-7]{Woo2021c} concerning the cubic case of the inhomogeneous Vinogradov 
system. There are few if any complications. When $\grA\subseteq [0,1)^k$ is measurable, 
we define the mean value $T_s(\grA)=T_s(\grA;X;\bfh)$ by putting
\begin{equation}\label{6.1}
T_s(\grA;X;\bfh)=\int_\grA |f_k(\bfalp;X)|^{2s}e(-\bfalp \cdot \bfh)\d\bfalp .
\end{equation}

\par We formulate Hardy-Littlewood dissections of the unit cube $[0,1)^k$ suitable for our 
purpose. When $Z$ is a real parameter with $1\le Z\le X$, we define the set of major arcs 
$\grK(Z)$ to be the union of the arcs
\[
\grK(q,\bfa;Z)=\{\bfalp \in [0,1)^k:\text{$|\alp_j-a_j/q|\le ZX^{-j}$ $(1\le j\le k)$}\},
\]
with $1\le q\le Z$, $0\le a_j\le q$ $(1\le j\le k)$ and $(q,a_1,\ldots ,a_k)=1$. We then 
define the complementary set of minor arcs $\grk(Z)=[0,1)^k\setminus \grK(Z)$.\par

Recall the one-dimensional Hardy-Littlewood dissection of $[0,1)$ into sets of major arcs 
$\grM(Q)$ and minor arcs $\grm(Q)$ introduced in the preamble to Lemma 
\ref{lemma5.3}. We now fix $L=X^{1/(8k^2)}$ and $Q=L^k$, and we define a 
$k$-dimensional set of arcs by taking $\grN=\grK(Q^2)$ and $\grn=\grk(Q^2)$. This 
intermediate Hardy-Littlewood dissection may be refined to obtain the narrow set of major 
arcs $\grP=\grK(L)$ and the corresponding set of minor arcs $\grp=\grk(L)$. It is useful 
then to write $\grP(q,\bfa)=\grK(q,\bfa;L)$. One readily confirms that $\grP\subseteq 
[0,1)^{k-1}\times \grM$, and hence the set of points $(\alp_1,\ldots ,\alp_k)$ lying in 
$[0,1)^k$ may be partitioned into the four disjoint subsets
\begin{align*}
\grW_1&=[0,1)^{k-1}\times \grm,\\
\grW_2&=\left( [0,1)^{k-1}\times \grM\right)\cap \grn,\\
\grW_3&=\left( [0,1)^{k-1}\times \grM\right) \cap (\grN\setminus \grP),\\
\grW_4&=\grP.
\end{align*}
Thus, in view of (\ref{2.3}) and (\ref{6.1}), one sees that
\begin{equation}\label{6.2}
J_{s,k}(X;\bfh)=T_s([0,1)^k)=\sum_{i=1}^4T_s(\grW_i).
\end{equation}

\par We assume throughout the extended main conjecture in Vinogradov's mean value 
theorem. Then by substituting $Q=X^{1/(8k)}$ into Lemma \ref{lemma5.3}, we deduce 
that when $\bfh\in \dbZ^k$ and $h_l\ne 0$ for some index $l$ with $1\le l<k$, one has
\begin{equation}\label{6.3}
T_s(\grW_1)=I_s(\grm(Q);X;\bfh)\ll X^{2s-\frac{1}{2}k(k+1)-1/(4k^5)}.
\end{equation}
Our definition of the sets of arcs $\grN$, $\grn$, $\grP$ and $\grp$ in the present memoir 
is identical with that employed in \cite[\S\S3-6]{Woo2021b}. Thus, the analysis applied in 
\cite[\S4]{Woo2021b} may be employed without material alteration in present 
circumstances to obtain the upper bound
\begin{equation}\label{6.4}
T_s(\grW_2)\ll X^{2s-\frac{1}{2}k(k+1)-1/(16k)}.
\end{equation}
Likewise, the analysis of \cite[\S5]{Woo2021b} applies, mutatis mutandis, to reveal that
\begin{equation}\label{6.5}
T_s(\grW_3)\ll X^{2s-\frac{1}{2}k(k+1)-1/(12k^3)}.
\end{equation}
Finally, the discussion of \cite[\S6]{Woo2021b} provides a template for the analysis of the 
major arcs in the present circumstances that may be applied almost without modification. 
Recall the definitions (\ref{1.4}) and (\ref{1.5}), and in addition the notational device of 
writing $n_j=h_jX^{-j}$ $(1\le j\le k)$. Then one finds that when 
$2s>\tfrac{1}{2}k(k+1)+2$, one has
\begin{equation}\label{6.6}
T_s(\grW_4)=T_s(\grP)=\grS_{s,k}(\bfh)\grJ_{s,k}(\bfh)X^{2s-k(k+1)/2}+
o(X^{2s-k(k+1)/2}),
\end{equation}
where
\begin{equation}\label{6.7}
\grJ_{s,k}(\bfh)=\int_{\dbR^k}|I(\bfbet)|^{2s}e(-\bfbet \cdot \bfn)\d\bfbet
\end{equation}
and
\begin{equation}\label{6.8}
\grS_{s,k}(\bfh)=\sum_{q=1}^\infty \sum_{\substack{1\le \bfa\le q\\ 
(q,a_1,\ldots ,a_k)=1}}|q^{-1}S(q,\bfa)|^{2s}e_q(-\bfa \cdot \bfh).
\end{equation}
By substituting the relations (\ref{6.3}) to (\ref{6.6}) into (\ref{6.2}), we conclude that
\[
J_{s,k}(X;\bfh)=\grS_{s,k}(\bfh)\grJ_{s,k}(\bfh)X^{2s-k(k+1)/2}+o(X^{2s-k(k+1)/2}).
\]
Take $s=k(k+1)/2$, and recall the notation (\ref{1.6}) and (\ref{1.7}). Then we obtain the 
relation
\[
B_k(X;\bfh)=\grS_k(\bfh)\grJ_k(\bfh)X^{k(k+1)/2}+o(X^{k(k+1)/2}),
\]
confirming the principal conclusion of Theorem \ref{theorem1.4}.\par

The observation that $0\le \grJ_k(\bfh)\ll 1$ and $0\le \grS_k(\bfh)\ll 1$ follows from 
the absolute convergence of $\grS_{s,k}(\bfh)$ and $\grJ_{s,k}(\bfh)$ when 
$2s>\tfrac{1}{2}k(k+1)+2$, combined with the standard theory of the singular series and 
singular integral described in \cite[Theorem 3.7]{AKC2004}. This completes the proof of 
Theorem \ref{theorem1.4}.

\section{Paucity and subconvexity} Our objective in this section is, not only to establish 
Theorem \ref{theorem1.5}, but also to illustrate the role that paucity phenomena play in 
subconvexity results associated with inhomogeneous Vinogradov systems. This circle of 
ideas is relevant in investigations of $J_{s,k}(X;\bfh)$ when $s$ is small, which is to say, 
no larger than $k$ or thereabouts. We begin with two almost trivial observations. The 
first shows that one cannot in general expect to obtain upper bounds in which one saves 
more than a factor $X$ over the convexity limited estimates exhibiting square-root 
cancellation.

\begin{theorem}\label{theorem7.1} Suppose that $s,k\in \dbN$. Then
\[
\max_{\bfh\in \dbZ^k\setminus \{{\mathbf 0}\}}J_{s,k}(X;\bfh)\gg X^{s-1}.
\]
\end{theorem}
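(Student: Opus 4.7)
The plan is to exhibit, for each $s$ and $k$, a specific nonzero $\bfh$ together with an explicit family of solutions of (\ref{1.1}) of size $\gg X^{s-1}$. The family will consist of near-diagonal configurations in which all but one of the pairs $(x_i,y_i)$ is forced to satisfy $x_i=y_i$, while one distinguished pair is pinned at a fixed non-diagonal choice.

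Concretely, I would take
\[
\bfh=(h_1,\ldots,h_k)\quad \text{with}\quad h_j=2^j-1^j \quad (1\le j\le k).
\]
This tuple lies in $\dbZ^k\setminus\{\mathbf 0\}$ since $h_1=1$. Next, for each $(x_2,\ldots,x_s)\in\{1,\ldots,X\}^{s-1}$ I set $(x_1,y_1)=(2,1)$ and $y_i=x_i$ for $2\le i\le s$. A direct computation gives
\[
\sum_{i=1}^{s}(x_i^j-y_i^j)=2^j-1^j=h_j\quad (1\le j\le k),
\]
so each such $(\bfx,\bfy)$ contributes to $J_{s,k}(X;\bfh)$. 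These configurations are mutually distinct (they are indexed by $(x_2,\ldots,x_s)$), so for $X\ge 2$ we obtain
\[
J_{s,k}(X;\bfh)\ge X^{s-1},
\]
which yields the desired lower bound on the maximum with implicit constant $1$.

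There is no real obstacle here; this is a construction argument, and the only point worth highlighting is that one must ensure the $\bfh$ produced is genuinely nonzero, which is obvious from $h_1=2-1=1$. One could slightly inflate the count by allowing the distinguished pair to occupy any of the $s$ slots, producing $\gg sX^{s-1}$ solutions, but this is not needed for the stated conclusion.
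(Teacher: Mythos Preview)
Your proof is correct and is essentially identical to the paper's own argument: the paper also fixes $h_j=2^j-1^j$, takes one pair $(x,y)=(2,1)$ and lets the remaining $s-1$ pairs be diagonal, yielding $J_{s,k}(X;\bfh)\ge X^{s-1}$ for $X\in\dbN$. The only cosmetic difference is that the paper places the distinguished pair in the last slot rather than the first.
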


\begin{proof} We fix integers $a$ and $b$ with $a>b$, say $a=2$ and $b=1$, and then 
fix $h_j=a^j-b^j$ $(1\le j\le k)$. Thus we have $\bfh\ne {\mathbf 0}$ and, for any 
$(s-1)$-tuple $\bft$ with $1\le \bft\le X$, the system (\ref{1.1}) has the solution
\[
\bfx=(t_1,\ldots ,t_{s-1},a),\quad \bfy=(t_1,\ldots ,t_{s-1},b).
\]
In this way, we see that when $X\in \dbN$ one has $J_{s,k}(X;\bfh)\ge X^{s-1}$.
\end{proof}

This theorem shows that the conclusion of Theorem \ref{theorem1.2} may be regarded as 
close to best possible. Moreover, any improvement in the upper bound 
$J_{s,k}(X;\bfh)\ll X^{s-1}$ must account for the special subvarieties of the complete 
intersection defined by (\ref{1.1}) containing subdiagonal solutions, in the appropriate 
sense. The most extreme such situation is addressed in the second of these almost trivial 
conclusions.\par

\begin{theorem}\label{theorem7.2} Suppose that $s,k\in \dbN$ and 
$\bfh\in \dbZ^k\setminus \{{\mathbf 0}\}$. Suppose that $h_j=0$ for precisely $t$ indices, 
say $j\in \{j_1,\ldots ,j_t\}$ with $1\le j_1<j_2<\ldots <j_t\le k$. Then provided that 
$1\le s\le t$, one has $J_{s,k}(X;\bfh)=0$.
\end{theorem}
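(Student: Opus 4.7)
The plan is to argue by contradiction. Assume $(\bfx,\bfy)$ is a solution to \eqref{1.1} with $1\le \bfx,\bfy\le X$. At each of the $t$ vanishing indices $j_1,\ldots,j_t$ the corresponding equation reduces to the power-sum identity
\[
\sum_{i=1}^s x_i^{j_u}=\sum_{i=1}^s y_i^{j_u}\qquad (1\le u\le t),
\]
and since $s\le t$ we obtain at least $s$ such identities at distinct positive integer exponents. The aim is to show that these $s$ identities alone force the multiset equality $\{x_1,\ldots,x_s\}=\{y_1,\ldots,y_s\}$; once this is in hand, every power sum of $\bfx$ and $\bfy$ agrees, so $h_j=0$ for every $j\in\{1,\ldots,k\}$, contradicting $\bfh\ne\mathbf{0}$.

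The substance of the proof therefore reduces to a reconstruction lemma: for $\bfx,\bfy\in\dbN^s$ and any $s$ distinct positive integer exponents $j_1<\ldots<j_s$, equality of the corresponding $s$ power sums forces $\bfy$ to be a permutation of $\bfx$. When $j_u=u$ this is a direct application of Newton's classical identities; in the general case I would argue by introducing the exponential sum
\[
F(\tau)=\sum_{i=1}^s x_i^\tau-\sum_{i=1}^s y_i^\tau=\sum_{l=1}^r\delta_l\, a_l^\tau,
\]
where $a_1<a_2<\ldots<a_r$ enumerate the distinct elements of $\{x_i\}\cup\{y_i\}$ and $\delta_l\in\dbZ$ records the signed multiplicity. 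The identity $\sum_l\delta_l=0$ together with the vanishing $F(j_u)=0$ for $u=1,\ldots,s$ produces $s+1$ distinct real zeros of $F$, and the classical P\'olya bound on real zeros of exponential sums yields $r\ge s+2$.

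The main obstacle is closing the gap against the trivial ceiling $r\le 2s$: when $s\ge 2$ the bare zero-count leaves a range $s+2\le r\le 2s$ to exclude. I would handle this by invoking a sharper Descartes-of-signs count tailored to the sign pattern of $(\delta_l)$ ordered by the $a_l$, exploiting the fact that the positive and negative $\delta_l$ must share a common total mass $\sigma\le s$ (as differences of non-negative multiplicity vectors each summing to $s$), and combining this with the non-vanishing of the generalised Vandermonde minors $\det(a_l^{j_u})$ for distinct positive $a_l$ and distinct integer exponents $j_u$. These ingredients rule out the requisite sign alternation unless every $\delta_l$ vanishes, whereupon the reconstruction lemma follows and the conclusion $J_{s,k}(X;\bfh)=0$ is immediate.
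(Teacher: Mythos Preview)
Your overall reduction coincides with the paper's: any solution of \eqref{1.1} forces the $t$ power-sum identities \eqref{7.1}, and everything then rests on the reconstruction lemma that $s$ matching power sums at distinct positive exponents force $\{x_i\}=\{y_i\}$ as multisets. The paper does not reprove this lemma; it simply invokes Steinig \cite{Ste1971}, where exactly this statement is established via Laguerre's rules.

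Your attempt to supply an independent proof of the lemma, however, does not close. The P\'olya/Descartes bound on $F(\tau)=\sum_l\delta_l a_l^\tau$ gives only $r\ge s+2$, against the trivial ceiling $r\le 2s$; you rightly flag this. But the fix you sketch does not do the job as stated. Counting sign changes via the integer mass constraint yields at best $V\le 2\sigma-1\le 2s-1$: each of the $P$ positive blocks and $N$ negative blocks contributes mass $\ge 1$, so $P+N=V+1\le \sigma_++\sigma_-=2\sigma$. This is perfectly compatible with the required $V\ge s+1$ once $s\ge 2$, so no contradiction emerges. The generalised Vandermonde nonvanishing says only that $\mathrm{rank}\,(a_l^{j_u})_{u,l}=\min(r,s+1)$, which again reproduces $r\ge s+2$ but places no further restriction on whether the $(r-s-1)$-dimensional kernel meets the set of integer vectors with mass profile $\sigma\le s$. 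Your closing sentence ``these ingredients rule out the requisite sign alternation'' is thus an assertion, not an argument.

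The lemma \emph{is} true, and the tools you name (Descartes/Laguerre sign rules, total positivity of generalised Vandermonde matrices) are indeed the ones Steinig uses, but his argument is a genuinely more delicate induction than the bare zero-count you have written. Either cite \cite{Ste1971} directly, as the paper does, or reproduce the full inductive proof.
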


\begin{proof} The hypothesis on $\bfh$ in the statement of the theorem ensures that 
whenever $\bfx$ and $\bfy$ satisfy the system (\ref{1.1}), then one has
\begin{equation}\label{7.1}
\sum_{i=1}^sx_i^{j_l}=\sum_{i=1}^sy_i^{j_l}\quad (1\le l\le t).
\end{equation}
When $\bfx,\bfy\in \dbN^s$ and $1\le s\le t$, it follows from \cite{Ste1971} that in all 
solutions of the system (\ref{7.1}), the $s$-tuple $(x_1,\ldots ,x_s)$ is a permutation of 
$(y_1,\ldots ,y_s)$. For any such solution, we find from (\ref{1.1}) that 
$\bfh={\mathbf 0}$, contradicting the hypothesis from the statement of the theorem. Thus 
we conclude that $J_{s,k}(X;\bfh)=0$.
\end{proof}

We now turn to the proof of Theorem \ref{theorem1.5}. This we view as establishing the 
principle that when there are few indices $l$ for which $h_l\ne 0$, then $J_{k,k}(X;\bfh)$ 
may be expected to be very small, and indeed far smaller than would be implied by the 
convexity limited bound. Our proof of this conclusion makes heavy use of our earlier work 
on paucity in relatives of Vinogradov systems \cite{Woo2021a}.

\begin{proof}[The proof of Theorem \ref{theorem1.5}] We work under the hypotheses of 
the statement of the theorem. Let $J^*_{k,k}(X;\bfh)$ denote the number of solutions of 
the system (\ref{1.1}) with $s=k$ in which $x_i=y_m$ for no indices $i$ and $m$ with 
$1\le i,m\le t$. Consider a solution $\bfx,\bfy$ of the system (\ref{1.1}) with $s=k$ counted 
by $J_{k,k}(X;\bfh)$ but not by $J_{k,k}^*(X;\bfh)$. By relabelling variables, we may 
suppose that $x_k=y_k$, and hence we deduce from (\ref{1.1}) that
\[
J_{k,k}(X;\bfh)-J_{k,k}^*(X;\bfh)\ll XJ_{k-1,k}(X;\bfh).
\]
However, since $h_j=0$ for the $k-1$ indices $j$ with $j\ne l$, it is apparent from Theorem 
\ref{theorem7.2} that $J_{k-1,k}(X;\bfh)=0$. Thus we conclude that
\[
J_{k,k}(X;\bfh)=J_{k,k}^*(X;\bfh).
\]

Consider next a solution $\bfx,\bfy$ of (\ref{1.1}) with $s=k$ counted by 
$J^*_{k,k}(X;\bfh)$. Define the elementary symmetric polynomials 
$\sig_j(\bfz)\in \dbZ[z_1,\ldots ,z_k]$ via the generating function identity
\[
\sum_{j=0}^k\sig_j(\bfz)t^j=\prod_{i=1}^k(1+tz_i),
\]
and write, further, 
\[
s_j(\bfz)=z_1^j+\ldots +z_k^j\quad (1\le j\le k).
\]
When $n\ge 1$, a familiar formula (see \cite[equation (2.2)]{Woo2021a}) delivers the 
relation
\[
\sig_n(\bfz)=(-1)^n\sum_{\substack{m_1+2m_2+\ldots +nm_n=n\\ m_i\ge 0}}
\prod_{i=1}^n\frac{(-s_i(\bfz))^{m_i}}{i^{m_i}m_i!}.
\]
The system of equations
\[
s_j(\bfx)=s_j(\bfy)+h_j\quad (1\le j\le k),
\]
is tantamount to (\ref{1.1}) with $s=k$. Since $h_j=0$ for $j\ne l$, we deduce that
\[
\sig_n(\bfx)=(-1)^n\sum_{\substack{m_1+2m_2+\ldots +nm_n=n\\ m_i\ge 0}}
\biggl( \frac{-s_l(\bfy)-h_l}{l^{m_l}m_l!}\biggr)^{m_l}
\prod_{\substack{1\le i\le n\\ i\ne l}}\frac{(-s_i(\bfy))^{m_i}}{i^{m_i}m_i!}.
\]
We conclude that
\begin{equation}\label{7.2}
\sig_n(\bfx)=\sig_n(\bfy)\quad (1\le n<l),
\end{equation}
and, when $n\ge l$, that there is a weighted homogeneous polynomial $\Psi_n(h;\bfy)$ 
having rational coefficients and satisfying the property that
\begin{equation}\label{7.3}
\sig_n(\bfx)-\sig_n(\bfy)=h_l\Psi_n(h_l;\bfy).
\end{equation}
Here, if a monomial term of $\Psi_n(h;\bfy)$ has total degree $d$ in terms of $\bfy$ and 
degree $e$ in terms of $h$, then one has $d+le=n-l$. It is evident, moreover, that there is 
a non-zero integer $A_n$, with $A_n=O_k(1)$, having the property that $A_n\Psi_n(h;\bfy)$ 
has integer coefficients all of size $O_k(1)$. In particular, in each solution $\bfx,\bfy$ of 
(\ref{1.1}) counted by $J^*_{k,k}(X;\bfh)$, we have $|\Psi_n(h_l;\bfy)|\ll X^{n-l}$.\par

We deduce from (\ref{7.2}) and (\ref{7.3}) that for the indeterminate $z$, one has
\begin{align*}
\prod_{i=1}^k(z-x_i)-\prod_{i=1}^k(z-y_i)&=(-1)^k\sum_{n=0}^k
(\sig_n(\bfx)-\sig_n(\bfy))(-z)^{k-n}\\
&=(-1)^kh_l\sum_{n=l}^k\Psi_n(h_l;\bfy)(-z)^{k-n}.
\end{align*}
By substituting $z=y_j$, we therefore infer that there is a polynomial $\tau(\bfy;y_j;h)$ for 
which one has
\[
A_1\ldots A_k\prod_{i=1}^k(y_j-x_i)=h_l\tau(\bfy;y_j;h_l).
\]
This polynomial $\tau(\bfy;y_j;h)$ has integer coefficients and is weighted homogeneous of 
total degree $k-l$, with each variable $y_i$ carrying weight $1$, and the variable $h$ 
carrying weight $l$. In particular, with the choices for $\bfy,y_j,h_l$ associated with the 
solution $\bfx,\bfy$ counted by $J^*_{k,k}(X;\bfh)$ currently under consideration, we may 
assume that $\tau(\bfy;y_j;h_l)$ is an integer of size $O(X^{k-l})$.\par

There are $O(X^{k-l})$ possible choices for the integer $\tau(\bfy;y_j;h_l)$, and hence also 
for $h_l\tau(\bfy;y_j;h_l)$. None of these are zero, for this would contradict the 
non-vanishing of $y_j-x_i$ $(1\le i,j\le k)$. Then, for each of the $O(X^{k-l})$ possible 
choices for $N(\bfy)=h_l\tau(\bfy;y_j;h_l)$, we see that the integers $y_j-x_i$ $(1\le i\le k)$ 
are divisors of $N(\bfy)$. Since $N(\bfy)=O(X^k)$, a standard divisor function estimate 
reveals that there are $O(X^\eps)$ possible choices for these divisors. Fixing any one of 
these choices and one of the $O(X)$ possible choices for $y_j$, it follows that 
$x_1,\ldots ,x_k$ and $y_j$ are now all fixed. Next, interchanging the roles of $\bfx$ and 
$\bfy$ in the argument just described, we find that
\[
A_1\ldots A_k\prod_{i=1}^k(x_j-y_i)=-h_l\tau(\bfx;x_j;-h_l).
\]
Since $\bfx$ is already fixed, it follows that the integers $x_j-y_i$ are all divisors of the 
fixed non-zero integer $N'(\bfx)=h_l\tau(\bfx,x_j;-h_l)$. As in the situation just discussed, 
there are $O(X^\eps)$ possible choices for these divisors. Fixing any one of these choices, 
and noting that $x_j$ is already fixed, we find that the integers $y_1,\ldots ,y_k$ are now 
also fixed. The total number of choices for $\bfx$ and $\bfy$ is consequently 
$O(X^{k-l+1+\eps})$. This confirms that 
\[
J_{k,k}(X;\bfh)=J^*_{k,k}(X;\bfh)\ll X^{k-l+1+\eps}
\]
and completes the proof of the theorem.
\end{proof}

\section{Appendix: the extended main conjecture in Vinogradov's mean value theorem} 
The purpose of this section is to discuss an extension to the main conjecture in Vinogradov's 
mean value theorem previously announced in 2019 by the author at a workshop in 
Oberwolfach. Since this conjecture offers numerous consequences, including but not limited 
to Theorem \ref{theorem1.4}, we take the opportunity to discuss its origin, nature, and 
its implications relevant herein.\par

We begin by recalling the main conjecture in Vinogradov's mean value theorem, proved in 
work of Bourgain, Demeter and Guth \cite{BDG2016} and in work of the author 
\cite{Woo2016, Woo2019}. A brief account of the history of Vinogradov's mean value 
theorem, and developments at the cusp of the proof of the main conjecture, is offered in 
\cite{Woo2014}. For the present discussion, we write $J_{s,k}(X)$ for 
$J_{s,k}(X;{\mathbf 0})$, which counts the number of integral solutions of the system of 
equations
\[
\sum_{i=1}^s(x_i^j-y_i^j)=0\quad (1\le j\le k),
\]
with $1\le \bfx,\bfy\le X$. When $\grB\subseteq [0,1)^k$ is measurable and $s>0$, we 
write
\begin{equation}\label{8.1}
M_{s,k}(X;\grB)=\int_\grB |f_k(\bfalp;X)|^{2s}\d\bfalp .
\end{equation}
Thus, by orthogonality, when $s\in \dbN$ we have $J_{s,k}(X)=M_{s,k}(X;[0,1)^k)$. The 
main conjecture in Vinogradov's mean value theorem asserts that
\begin{equation}\label{8.2}
J_{s,k}(X)\ll X^{s+\eps}+X^{2s-k(k+1)/2}.
\end{equation}

\par In order to motivate the formulation of the extended main conjecture, we briefly 
sketch how the two terms on the right hand side of (\ref{8.2}) arise from the application of 
the circle method. Consider a Hardy-Littlewood dissection of the unit cube $[0,1)^k$ into 
sets of major and minor arcs of the type $\grK(Y)$ and $\grk(Y)$, for a suitable parameter 
$Y$, as defined in \S6. Define the generating functions $I(\bfbet)$ and $S(q,\bfa)$ as in 
(\ref{1.4}) and (\ref{1.5}), and put
\begin{align*}
I(\bfbet;X)&=\int_0^Xe(\bet_1\gam+\bet_2\gam^2+\ldots +\bet_k\gam^k)\d\gam\\
&=XI(\bet_1X,\bet_2X^2,\ldots ,\bet_kX^k).
\end{align*}
When $\bfalp \in \grK(q,\bfa;Y)\subseteq \grK(Y)$, the exponential sum $f_k(\bfalp;X)$ is 
closely approximated by $q^{-1}S(q,\bfa)I(\bfalp-\bfa/q;X)$. Until recently, it was widely 
believed by many experts in the Hardy-Littlewood method that when $\bfalp\in [0,1)^k$, 
there should exist $q\in \dbN$ and $\bfa\in \dbZ^k$ with $(q,a_1,\ldots ,a_k)=1$ satisfying
\begin{equation}\label{8.3}
f_k(\bfalp;X)-q^{-1}S(q,\bfa)I(\bfalp -\bfa/q;X)\ll X^{1/2+\eps}.
\end{equation}
Recent work of Brandes et al.~\cite{BPPSV2021} shows that such a strong relation cannot 
be true in full generality when $\bfalp$ is very close to $\bfa/q$. However, any failure of 
this relation is expected to produce a small number of secondary terms also of major arc 
type, and hence is not expected to have any material impact on the outcome of the ensuing 
discussion.\par

Recalling the definitions (\ref{6.7}) and (\ref{6.8}) of $\grJ_{s,k}(\bfh)$ and 
$\grS_{s,k}(\bfh)$, one finds that the contribution of the term 
$q^{-1}S(q,\bfa)I(\bfalp-\bfa/q;X)$ from (\ref{8.3}) within the mean value $J_{s,k}(X)$ is 
at most $X^{2s-k(k+1)/2}I_0$, where
\[
I_0=\sum_{q=1}^\infty 
\sum_{\substack{1\le a_1,\ldots ,a_k\le q\\ (q,a_1,\ldots ,a_k)=1}}
|q^{-1}S(q,\bfa)|^{2s}\int_{-\infty}^\infty |I(\bfbet)|^{2s}\d\bfbet 
=\grS_{s,k}({\mathbf 0})\grJ_{s,k}({\mathbf 0}).
\]
The details of the argument here are familiar from the analysis of the major arc 
contribution in an application of the circle method to the problem. We note that the singular 
series $\grS_{s,k}({\mathbf 0})$ converges absolutely for $2s>\tfrac{1}{2}k(k+1)+2$, and 
the singular integral $\grJ_{s,k}({\mathbf 0})$ converges absolutely for 
$2s>\tfrac{1}{2}k(k+1)+1$ (see \cite[Theorem 1]{Ark1984}). Thus, under the first of 
these conditions on $s$, we obtain
\[
J_{s,k}(X)=\int_{[0,1)^k}|f_k(\bfalp;X)|^{2s}\d\bfalp \ll 
X^{2s-k(k+1)/2}+\int_{[0,1)^k}(X^{1/2+\eps})^{2s}\d\bfalp ,
\]
and thereby we recover the main conjecture (\ref{8.2}) established in 
\cite{BDG2016, Woo2016, Woo2019}. For smaller values of $s$, the same conclusion 
follows by application of H\"older's inequality, since the term $X^{s+\eps}$ dominates, so 
(\ref{8.2}) follows for all $s\in \dbN$. As a final comment relevant to this preliminary 
discussion, we remark that the aforementioned deviations from this model suggested by 
work of \cite{BPPSV2021} would, at worst, inflate the above estimates by a factor of 
$X^\eps$ for smaller values of $s$, and this has no impact in our wider discusion.\par

The question now arises concerning what outcome is to be expected when we integrate, 
not over the whole unit cube $[0,1)^k$, but instead over a subset $\grB$. The same 
philosophy demonstrates that when $s$ is any real number satisfying 
$2s>\tfrac{1}{2}k(k+1)+2$, one should have the estimate
\[
\int_\grB |f_k(\bfalp;X)|^{2s}\d\bfalp \ll X^{2s-k(k+1)/2}I_0+
\int_\grB (X^{1/2+\eps})^{2s}\d\bfalp ,
\]
whence, on recalling the definition (\ref{8.1}),
\[
M_{s,k}(X;\grB)\ll X^{2s-k(k+1)/2}+X^{s+\eps}\text{mes}(\grB).
\]
This is tantamount to the extended main conjecture in Vinogradov's mean value theorem.

\begin{conjecture}\label{conjecture8.1} Suppose that $k\in \dbN$ and 
$\grB\subseteq [0,1)^k$ is measurable. Then whenever $s$ is a real number with 
$s\ge \tfrac{1}{4}k(k+1)+1$, one has
\[
\int_\grB |f_k(\bfalp;X)|^{2s}\d\bfalp \ll X^\eps \left( X^s\text{mes}(\grB)+
X^{2s-k(k+1)/2}\right).
\]
\end{conjecture}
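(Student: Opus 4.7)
The plan is to reduce Conjecture \ref{conjecture8.1} to a Hardy-Littlewood dissection on the ambient cube $[0,1)^k$, and then to bound the two resulting pieces separately. Fix a parameter $Y$, say $Y = X^{1/(4k)}$, and let $\grK = \grK(Y)$ and $\grk = \grk(Y)$ denote the $k$-dimensional sets of major and minor arcs defined in \S6. Writing
\[
\grB = (\grB \cap \grK) \cup (\grB \cap \grk),
\]
one treats each piece separately.

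On the minor arc piece $\grB \cap \grk$, the target is the pointwise square-root cancellation bound $|f_k(\bfalp; X)| \ll X^{1/2 + \eps}$ throughout $\grk$. Granting such a bound, one would immediately obtain
\[
\int_{\grB \cap \grk} |f_k(\bfalp; X)|^{2s} \d\bfalp \ll X^{s+\eps} \text{mes}(\grB),
\]
which supplies the first term on the right hand side of the desired inequality. On the major arc piece $\grB \cap \grK$, one discards the intersection with $\grB$ (exploiting non-negativity of the integrand) and bounds $\int_\grK |f_k|^{2s} \d\bfalp$ directly. Substituting the approximation $f_k(\bfalp; X) \approx q^{-1} S(q,\bfa) I(\bfalp - \bfa/q; X)$ on each arc $\grK(q,\bfa;Y)$, and appealing to the absolute convergence of $\grS_{s,k}({\mathbf 0})$ and $\grJ_{s,k}({\mathbf 0})$ in the regime $s \ge \tfrac{1}{4}k(k+1)+1$ as discussed in \S8, yields
\[
\int_\grK |f_k(\bfalp; X)|^{2s} \d\bfalp \ll X^{2s-k(k+1)/2+\eps},
\]
supplying the second term.

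The decisive obstacle is the pointwise minor arc bound, which is essentially the strong Weyl-type approximation (8.3). As explained in \S8, recent work of Brandes et al.~\cite{BPPSV2021} shows that (8.3) can actually fail when $\bfalp$ is extraordinarily close to a rational with small denominator, so the statement cannot hold as naively formulated. A realistic attack therefore requires first identifying the secondary resonances that invalidate (8.3), absorbing them into an enlarged major arc set $\widetilde{\grK}$ supporting a suitable generalised approximation, and then verifying that the corresponding enlarged singular object still converges for $s \ge \tfrac{1}{4}k(k+1)+1$. An alternative would be to replace the pointwise control on $\grk$ by an $L^\infty$-$L^2$ interpolation, amplifying a weaker Weyl-type estimate against the now-proven main conjecture; but the exponents currently accessible from decoupling and efficient congruencing fall substantially short of delivering the exponent $1/2+\eps$ required. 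Both routes lie well beyond present technology, which is precisely why Conjecture \ref{conjecture8.1} must be imposed as a hypothesis rather than proved.
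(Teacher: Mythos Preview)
The statement under consideration is a \emph{conjecture}, and the paper does not prove it; \S8 offers only a heuristic motivation together with an explanation of why the natural line of attack is obstructed. Your proposal correctly recognises this and arrives at the same conclusion, namely that the conjecture must be assumed rather than established.

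Your heuristic is essentially the one given in the paper, though the packaging differs slightly. The paper works directly with the pointwise approximation \[f_k(\bfalp;X)-q^{-1}S(q,\bfa)I(\bfalp-\bfa/q;X)\ll X^{1/2+\eps}\] asserted to hold (conjecturally) for \emph{every} $\bfalp\in[0,1)^k$ with suitable $q,\bfa$; the main term then produces the contribution $X^{2s-k(k+1)/2}$ via absolute convergence of $\grS_{s,k}({\mathbf 0})$ and $\grJ_{s,k}({\mathbf 0})$, while the error term, raised to the $2s$-th power and integrated over $\grB$, gives $X^{s+\eps}\text{mes}(\grB)$. You instead dissect $\grB$ itself into major and minor arc pieces, invoke pointwise square-root cancellation on the minor arcs, and discard $\grB$ on the major arcs. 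The two framings are interchangeable and lead to the same obstruction: one needs, in effect, the relation (8.3), and as both you and the paper observe, the work of Brandes et al.~\cite{BPPSV2021} shows that (8.3) fails near certain rational points, so secondary major-arc-type terms would have to be identified and controlled. Your remark that amplification via the proven main conjecture also falls short is a fair additional comment not made explicitly in the paper.
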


We have limited the values of $s$ admissible in this conclusion to the range 
$s\ge \tfrac{1}{4}k(k+1)+1$ in order that appropriate convergence of the singular series 
$\grS_{s,k}({\mathbf 0})$ and singular integral $\grJ_{s,k}({\mathbf 0})$ be guaranteed. 
For larger values of $s$ absolute convergence follows from \cite[Theorem 1]{Ark1984}, and 
an application of H\"older's inequality delivers similar conclusions at the cost of inflating 
bounds by a factor of $X^\eps$ when $s=\tfrac{1}{4}k(k+1)+1$. For smaller values of $s$, 
the conjecture should be modified to reflect a larger secondary term arising from the 
potential divergence of these quantities. However, one may recover a cheap but useful 
version of the conjecture applicable for all $s$ provided that $\text{mes}(\grB)$ is not too 
small.

\begin{conjecture}\label{conjecture8.2} Suppose that $k\in \dbN$ and 
$\grB\subseteq [0,1)^k$ is measurable. Then whenever $s$ is a positive number and
\begin{equation}\label{8.4}
\text{mes}(\grB)\gg X^{1-k(k+1)/4},
\end{equation}
one has
\begin{equation}\label{8.5}
\int_\grB |f_k(\bfalp;X)|^{2s}\d\bfalp \ll X^\eps \left( X^s\text{mes}(\grB)+
X^{2s-k(k+1)/2}\right) .
\end{equation}
\end{conjecture}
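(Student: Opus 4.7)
\medskip
\noindent\textbf{Proof proposal for Conjecture \ref{conjecture8.2}.} The plan is to deduce Conjecture \ref{conjecture8.2} from Conjecture \ref{conjecture8.1} in a completely routine manner, splitting into two cases according to the size of $s$. Write $s_0=\tfrac{1}{4}k(k+1)+1$ for the threshold appearing in Conjecture \ref{conjecture8.1}.

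When $s\ge s_0$, there is nothing to do: Conjecture \ref{conjecture8.1} applies without any hypothesis on $\text{mes}(\grB)$, and yields precisely (\ref{8.5}). The role of the hypothesis (\ref{8.4}) is confined to the range $0<s<s_0$, where a H\"older interpolation between $L^{2s_0}$ and $L^0$ (that is, the trivial bound from $\text{mes}(\grB)$) is called for.

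Before interpolating, I would record the elementary observation that (\ref{8.4}) is precisely the threshold at which the two terms on the right of (\ref{8.5}) balance at $s=s_0$. Indeed, a short computation gives $2s_0-k(k+1)/2=2$, so the condition $X^{s_0}\text{mes}(\grB)\ge X^{2s_0-k(k+1)/2}$ reduces to $\text{mes}(\grB)\ge X^{2-s_0}=X^{1-k(k+1)/4}$, which is exactly (\ref{8.4}). Under (\ref{8.4}) the first term of Conjecture \ref{conjecture8.1} at exponent $s_0$ therefore dominates, giving
\[
\int_\grB |f_k(\bfalp;X)|^{2s_0}\d\bfalp \ll X^{s_0+\eps}\text{mes}(\grB).
\]

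Now fix $0<s<s_0$ and set $\tet=s/s_0\in (0,1)$. H\"older's inequality applied to the pair of exponents $(1/\tet,1/(1-\tet))$ gives
\[
\int_\grB |f_k(\bfalp;X)|^{2s}\d\bfalp \le \biggl( \int_\grB |f_k(\bfalp;X)|^{2s_0}\d\bfalp \biggr)^{\tet}\text{mes}(\grB)^{1-\tet}.
\]
Substituting the bound established in the previous paragraph, the right-hand side is
\[
\ll \left( X^{s_0+\eps}\text{mes}(\grB)\right)^{\tet}\text{mes}(\grB)^{1-\tet}=X^{s_0\tet +\eps\tet}\text{mes}(\grB)=X^{s+\eps'}\text{mes}(\grB),
\]
which is majorised by $X^\eps(X^s\text{mes}(\grB)+X^{2s-k(k+1)/2})$, as required for (\ref{8.5}).

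There is no substantial obstacle: the entire argument is a one-line H\"older interpolation, with the balance-of-terms computation at $s=s_0$ serving only to explain why the specific shape of (\ref{8.4}) is the natural one. The only care needed is to ensure that Conjecture \ref{conjecture8.1} is available at the chosen pivot $s_0$ (which it is, as $s_0\ge \tfrac{1}{4}k(k+1)+1$), and that the H\"older exponents $\tet$ and $1-\tet$ are both strictly positive, which is automatic in the regime $0<s<s_0$.
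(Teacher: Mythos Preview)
Your proposal is correct and follows essentially the same approach as the paper: both arguments pivot at the threshold exponent $s_0=\tfrac14 k(k+1)+1$ (the paper writes $K=2s_0$), apply Conjecture \ref{conjecture8.1} there, and interpolate down to smaller $s$ via H\"older against the constant function $1$ over $\grB$. The only cosmetic difference is the order of operations---you invoke (\ref{8.4}) first to reduce the $s_0$-bound to the single term $X^{s_0+\eps}\text{mes}(\grB)$ before interpolating, whereas the paper carries both terms through H\"older and then uses (\ref{8.4}) at the end to absorb the secondary term---but the substance is identical.
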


To see that Conjecture \ref{conjecture8.2} follows from Conjecture \ref{conjecture8.1}, 
note first that the conclusion (\ref{8.5}) of the former is immediate from the latter in the 
situation wherein $s\ge\tfrac{1}{4}k(k+1)+1$. Suppose then that $t$ is a positive number 
with $t<\tfrac{1}{4}k(k+1)+1$, and put $K=\tfrac{1}{2}k(k+1)+2$. Then on assuming the 
validity of Conjecture \ref{conjecture8.1}, an application of H\"older's inequality yields
\begin{align*}
\int_\grB |f_k(\bfalp;X)|^{2t}\d\bfalp &\le \biggl( \int_\grB\d\bfalp \biggr)^{1-2t/K}
\biggl( \int_\grB |f_k(\bfalp;X)|^K\d\bfalp \biggr)^{2t/K}\\
&\ll X^\eps \left( \text{mes}(\grB)\right)^{1-2t/K}\left( X^{K/2}\text{mes}(\grB)+
X^{K-k(k+1)/2}\right)^{2t/K}\\
&\ll X^{\eps}\left( X^t\text{mes}(\grB)+(\text{mes}(\grB))^{1-2t/K}X^{4t/K}\right) .
\end{align*}
The second term here is asymptotically majorised by the first provided that 
$(\text{mes}(\grB))^{2t/K}\gg X^{4t/K-t}$, a condition that is satisfied when 
$\text{mes}(\grB)\gg X^{2-K/2}$. The conclusion (\ref{8.5}) therefore follows provided 
that (\ref{8.4}) holds.\par

We next consider some consequences and limitations of these conjectures.

\begin{theorem}\label{theorem8.3} Assume the extended main conjecture in Vinogradov's 
mean value theorem. Consider positive numbers $\tet_1,\ldots ,\tet_k$ and the box
\[
\grB(\bftet)=[-X^{-\tet_1},X^{-\tet_1}]\times \cdots \times [-X^{-\tet_k},X^{-\tet_k}].
\]
Then whenever $s\ge \tfrac{1}{4}k(k+1)+1$, one has
\[
\int_{\grB(\bftet)}|f_k(\bfalp;X)|^{2s}\d\bfalp \ll X^\eps \left( 
X^{s-\tet_1-\ldots -\tet_k}+X^{2s-k(k+1)/2}\right).
\]
The same conclusion also holds without condition on $s$ provided that
\[
\tet_1+\ldots +\tet_k\le \tfrac{1}{4}k(k+1)-1.
\]
\end{theorem}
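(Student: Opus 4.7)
The proposal is to observe that this theorem is essentially a direct substitution of volume data into Conjectures \ref{conjecture8.1} and \ref{conjecture8.2}. First I would record that
\[
\mathrm{mes}(\grB(\bftet))=\prod_{j=1}^k 2X^{-\tet_j}=2^kX^{-(\tet_1+\ldots +\tet_k)},
\]
at least once $X$ is large enough that $2X^{-\tet_j}\le 1$ for every $j$ (which is legitimate since $\tet_j>0$ and the generating function $f_k$ is $1$-periodic, so $\grB(\bftet)$ may be viewed as a subset of $[0,1)^k$ of the same measure after reduction modulo $\dbZ^k$).

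For the first assertion, under the hypothesis $s\ge \tfrac{1}{4}k(k+1)+1$ I would simply invoke Conjecture \ref{conjecture8.1} with $\grB=\grB(\bftet)$, obtaining
\[
\int_{\grB(\bftet)}|f_k(\bfalp;X)|^{2s}\d\bfalp \ll X^\eps\left(X^s\,\mathrm{mes}(\grB(\bftet))+X^{2s-k(k+1)/2}\right)\ll X^\eps\left(X^{s-\tet_1-\ldots -\tet_k}+X^{2s-k(k+1)/2}\right),
\]
which is the claimed bound.

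For the second assertion, I would verify that the side condition $\tet_1+\ldots +\tet_k\le \tfrac{1}{4}k(k+1)-1$ is exactly what is needed to activate Conjecture \ref{conjecture8.2}. Indeed, with the measure computed above one has
\[
\mathrm{mes}(\grB(\bftet))=2^kX^{-(\tet_1+\ldots +\tet_k)}\gg X^{-(k(k+1)/4-1)}=X^{1-k(k+1)/4},
\]
so the hypothesis (\ref{8.4}) is satisfied. Applying Conjecture \ref{conjecture8.2} (which the excerpt has already shown to be a consequence of Conjecture \ref{conjecture8.1} via H\"older's inequality) with no restriction on $s$ again yields the stated estimate.

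The main obstacle is purely bookkeeping: ensuring that the box really lies inside (a translate of) $[0,1)^k$ so that the periodicity of $f_k$ makes the integral well defined, and confirming the exact threshold $\tfrac{1}{4}k(k+1)-1$ matches the measure-theoretic condition (\ref{8.4}). No further analysis beyond these arithmetic comparisons is needed, since all the work has been absorbed into the two conjectures.
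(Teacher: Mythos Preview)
Your proposal is correct and follows essentially the same approach as the paper: the paper's proof is a single sentence observing that $\mathrm{mes}(\grB(\bftet))\asymp X^{-\tet_1-\ldots-\tet_k}$ and then invoking Conjectures \ref{conjecture8.1} and \ref{conjecture8.2} directly. Your write-up simply spells out the measure computation and the verification of the threshold in slightly more detail.
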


\begin{proof} The conclusions of the theorem are immediate from Conjectures 
\ref{conjecture8.1} and \ref{conjecture8.2} on observing that 
$\text{mes}(\grB(\bftet))\asymp X^{-\tet_1-\ldots -\tet_k}$.
\end{proof}

The cap sets $\grB=[0,1]^{k-1}\times [0,X^{-\tet_k}]$ considered by Demeter, Guth and 
Wang \cite{DGW2020} are addressed by the special case $\bftet=(0,\ldots ,0,\tet_k)$ of 
this theorem. Indeed, the reader will find that \cite[Conjecture 2.5]{DGW2020} asserts that 
the conclusion of Theorem \ref{theorem8.3} should hold in the special case 
$\grB(\bftet)=[0,1]^{k-1}\times [0,X^{-\tet_k}]$, provided that $0\le \tet_k\le k-1$ and 
$k\ge 2$. These authors proved this conjecture when $k=3$ and $0\le \tet_3\le 3/2$ in 
the special case $s=6-\tet_3$ (see \cite[Theorem 3.3]{DGW2020}). Plainly, the (conditional) 
conclusion of Theorem \ref{theorem8.3} is decidedly more general in scope, and Conjecture 
\ref{conjecture8.1} is of even wider generality.\par

We remark that in the situation wherein $\grB$ is restricted to the kind of generalised cap 
sets that are the subject of Theorem \ref{theorem8.3}, the major arc analysis implicit in 
the formulation of the conjecture may presumably be improved. The absolute convergence 
of the singular series $\grS_{s,k}({\mathbf 0})$ requires that one have 
$s>\tfrac{1}{4}k(k+1)+1$ owing in part to the extra divergence arising from the sum over 
$a_1,\ldots ,a_k$ implicit in the definition (\ref{6.8}). If one or more of the variables 
$\alp_j$ is restricted in a manner ensuring that $a_j$ is limited to a much smaller range 
than the interval $[1,q]$, as is the case in cap set problems, then presumably there is scope 
for improving the condition on $s$ towards the less onerous constraint 
$s>\tfrac{1}{4}k(k+1)+\tfrac{1}{2}$.\par

We finish this appendix by noting that the conclusion of Conjecture \ref{conjecture8.1} 
cannot hold as stated when $s<\tfrac{1}{4}k(k+1)$. In order to confirm this assertion, 
consider the system of inequalities
\begin{equation}\label{8.6}
\biggl| \sum_{i=1}^s(x_i^j-y_i^j)\biggr|\le sX^{j-1}\quad (1\le j\le k).
\end{equation}
Denote by $\Ome_1(X)$ the number of solutions of the system of inequalities (\ref{8.6}) 
with $1\le \bfx,\bfy\le X$. We observe that when $1\le \bfx,\bfy\le X^{1-1/k}$, then the 
system (\ref{8.6}) is satisfied whenever
\begin{equation}\label{8.7}
\biggl| \sum_{i=1}^s(x_i^j-y_i^j)\biggr|\le s(X^{1-1/k})^jX^{-(k-j)/k}\quad (1\le j\le k-1).
\end{equation}
Notice, in particular, that the inequality in (\ref{8.6}) corresponding to the exponent $j=k$ 
is automatically satisfied in these circumstances. We denote by $\Ome_2(X)$ the number of 
solutions of (\ref{8.7}) subject to this condition $1\le \bfx,\bfy\le X^{1-1/k}$. Thus we 
have the lower bound $\Ome_1(X)\ge \Ome_2(X)$.\par

Write
\[
\grD=\bigtimes_{j=1}^{k-1}\Bigl[ -\frac{1}{2s}(X^{1-1/k})^{-j}X^{(k-j)/k},\frac{1}{2s}
(X^{1-1/k})^{-j}X^{(k-j)/k}\Big].
\]
Then a standard argument (see for example \cite[Lemma 2.1]{Wat1989}) shows that
\begin{equation}\label{8.8}
\Ome_2(X)\gg \biggl( \prod_{j=1}^{k-1}(X^{1-1/k})^jX^{-(k-j)/k}\biggr) \int_\grD 
|f_{k-1}(\bfalp;X^{1-1/k})|^{2s}\d\bfalp .
\end{equation}
Next define a narrow major arc around ${\mathbf 0}$ by taking $\tau>0$ sufficiently small 
in terms of $s$ and $k$, and put
\[
\grD_0=\bigtimes_{j=1}^{k-1}\Bigl[ -\tau(X^{1-1/k})^{-j},\tau (X^{1-1/k})^{-j}\Big].
\]
Standard arguments from the theory of Vinogradov's mean value theorem (see 
\cite[Chapter 7]{Vau1997}) show that for $\bfalp\in \grD_0$ one has 
$|f_{k-1}(\bfalp;X^{1-1/k})|\gg X^{1-1/k}$, whence
\begin{align*}
\int_{\grD_0}|f_{k-1}(\bfalp;X^{1-1/k})|^{2s}\d\bfalp &\gg (X^{1-1/k})^{2s}
\text{mes}(\grD_0)\\
&\gg (X^{1-1/k})^{2s-k(k-1)/2}.
\end{align*}
Since we may assume that $\grD_0\subseteq \grD$, we deduce from (\ref{8.8}) that
\begin{align*}
\Ome_2(X)&\gg X^{(k-1)(k-2)/2}\int_{\grD_0}|f_{k-1}(\bfalp;X^{1-1/k})|^{2s}\d\bfalp \\
&\gg X^{(k-1)(k-2)/2}\cdot X^{2s(1-1/k)-(k-1)^2/2}.
\end{align*}
Thus, the lower bound $\Ome_1(X)\ge \Ome_2(X)$ leads us to the conclusion
\begin{equation}\label{8.9}
\Ome_1(X)\gg X^{2s(1-1/k)-(k-1)/2}.
\end{equation}

\par On the other hand, again employing the same standard argument (see 
\cite[Lemma 2.1]{Wat1989}), we find that
\[
\Ome_1(X)\ll \biggl( \prod_{j=1}^kX^{j-1}\biggr)\int_{\grD_1}|f_k(\bfalp;X)|^{2s}
\d\bfalp ,
\]
where
\[
\grD_1=\bigtimes_{i=1}^k\Bigl[ -\frac{1}{2s}X^{1-j},\frac{1}{2s}X^{1-j}\Bigr] .
\]
Thus
\[
\Ome_1(X)\ll X^{k(k-1)/2}\int_{\grD_1}|f_k(\bfalp;X)|^{2s}\d\bfalp .
\]
Assuming the validity of Conjecture \ref{conjecture8.1} without constraint on $s$, it 
follows that
\begin{align*}
\Ome_1(X)&\ll X^{k(k-1)/2+\eps}\left( X^s\text{mes}(\grD_1)+X^{2s-k(k+1)/2}\right) \\
&\ll X^\eps (X^s+X^{2s-k}).
\end{align*}
We therefore conclude from (\ref{8.9}) that
\[
X^{2s(1-1/k)-(k-1)/2}\ll \Ome_1(X)\ll X^\eps(X^s+X^{2s-k}).
\]
This is tenable only when
\[
2s(1-1/k)-(k-1)/2\le \max\{s,2s-k\},
\]
which is to say that either $s\le \tfrac{1}{2}(k+1)+1/(k-2)$, or $2s\ge k(k+1)/2$. Thus we 
find that the upper bound asserted in Conjecture \ref{conjecture8.1} cannot hold in general 
in the absence of a condition at least as strong as $s\ge \tfrac{1}{4}k(k+1)$.

\bibliographystyle{amsbracket}
\providecommand{\bysame}{\leavevmode\hbox to3em{\hrulefill}\thinspace}

\end{document}